\documentclass[12pt]{amsart}
\usepackage{amsmath,amssymb}
\usepackage{bbm}
\usepackage{graphicx,tikz,mathtools}
\usepackage{xcolor}
\usepackage[left=2.65cm,right=2.65cm,top=2.7cm,bottom=2.7cm]{geometry}
\usepackage[pdftex]{hyperref}
\usepackage{cite}
\usepackage{mathrsfs} 
\usepackage{relsize}
\usepackage{caption}
\hypersetup{
	colorlinks=true,
	linkcolor=blue, 
	citecolor=blue,
	filecolor=blue,
	urlcolor=blue,
}

\makeatletter
\@namedef{subjclassname@2020}{\textup{2020} Mathematics Subject Classification}
\makeatother

\newtheorem{theorem}{Theorem}[section]
\newtheorem{proposition}[theorem]{Proposition}
\newtheorem{corollary}[theorem]{Corollary}
\newtheorem{lemma}[theorem]{Lemma}

\theoremstyle{definition}

\numberwithin{equation}{section}

\newcommand{\e}{\epsilon}
\newcommand{\R}{\mathbb{R}}

\newcommand{\N}{\mathbb{N}}
\renewcommand\leq\leqslant
\renewcommand\geq\geqslant
\renewcommand\le\leqslant
\renewcommand\ge\geqslant

\renewcommand*\textcircled[1]{\tikz[baseline=(char.base)]{
  \node[shape=circle,draw,inner sep=1.5pt] (char) {#1};}}

\allowdisplaybreaks

\begin{document}
 
\title[A fractional critical problem in the halfspace]{A fractional critical problem in the halfspace \\ with Neumann conditions}

\author[A. DelaTorre]{Azahara DelaTorre}
\address{Dipartimento di Matematica ``Guido Castelnuovo'', Sapienza Universit\`a di Roma, Piazzale Aldo Moro 5, 00185 Roma, Italy}
\email{azahara.delatorrepedraza@uniroma1.it}

\author[S. Dipierro]{Serena Dipierro}
\address{Department of Mathematics and Statistics, The University of Western Australia, 35 Stirling Highway, Crawley WA 6009, Australia}
\email{serena.dipierro@uwa.edu.au}

\author[A. Pistoia]{Angela Pistoia}
\address{Dipartimento di Scienze di Base e Applicate per l'Ingegneria, Sapienza Universit\`a di Roma, Via Antonio Scarpa 16, 00161 Roma, Italy}
\email{angela.pistoia@uniroma1.it}

\author[E. Valdinoci]{Enrico Valdinoci}
\address{Department of Mathematics and Statistics, The University of Western Australia, 35 Stirling Highway, Crawley WA 6009, Australia}
\email{enrico.valdinoci@uwa.edu.au}

\keywords{Fractional Laplacian, critical exponent, nonlocal Neumann condition.}

\thanks{The first author acknowledges financial support from the Spanish Ministry of Science and Innovation (MICINN), through the IMAG-Maria de Maeztu Excellence Grant CEX2020-001105-M/AEI/10.13039/501100011033. She is also supported by the MICIN/AEI Grants PID2023-150166NB-I00 and PID2024-155314NB-I00 and by J. Andalucia (FQM-116); RED2022-134784-T, funded by MCIN/AEI/10.13039/501100011033; Fondi Ateneo - Sapienza Università di Roma and INdAM-GNAMPA Projects 2025 and 2026 with codes CUP E5324001950001 and CUP E53C25002010001. The second author is supported by the Australian Future Fellowship FT230100333. The fourth author is supported by the Australian Laureate Fellowship FL190100081. This article was written on the occasion of a pleasant visit by the second and fourth authors to Sapienza Università di Roma in July 2026. It is a pleasure to thank Sapienza Università di Roma for its support and welcoming environment.}

\subjclass[2020]{35J10, 35R11}

\begin{abstract}
Given~$s\in(0,1)$ and~$n>2s$, we prove that the critical Sobolev quotient associated with the fractional Neumann problem in the halfspace admits a minimizer. As a consequence, after a suitable normalization, we obtain a nontrivial weak solution of
\begin{equation*}
\begin{cases} (-\Delta)^s u=u^{2^*_s-1} &{\mbox{ in }}\R^n_+,\\
{\mathcal{N}}_s u=0&{\mbox{ in }}\R^n_-,
\end{cases}
\end{equation*}
where~${\mathcal{N}}_s$ represents the nonlocal Neumann condition of exterior type.
\end{abstract}

\maketitle
 
\section{Introduction} \label{S.Introduction}
\subsection{Mathematical setting and main results}
The goal of this paper is to study a fractional problem with critical exponent in the halfspace under nonlocal Neumann conditions. The main difficulty is that the Neumann condition is genuinely nonlocal: it is prescribed on the whole complement of the halfspace and couples the values of the solution inside and outside the domain. Consequently, the classical restriction and reflection arguments available for local Neumann problems do not apply directly. We refer to~\cite{MR3651008,MR4102340,MR4630055,MR5023029,2025arXiv250800337B,2025arXiv251013340D} for several aspects of nonlocal Neumann problems and related models.

Specifically, given~$s\in(0,1)$ and~$n>2s$, we define the fractional critical exponent~$2^*_s:=\frac{2n}{n-2s}$ and we want to find nontrivial solutions of the equation
\begin{equation}\label{pb1}
\begin{cases} (-\Delta)^s u=u^{2^*_s-1} &{\mbox{ in }}\R^n_+,\\
{\mathcal{N}}_s u=0&{\mbox{ in }}\R^n_-.
\end{cases}
\end{equation}
Here, we are using the standard notation~$\R^n_+:=\R^{n-1}\times(0,+\infty)$,
$\R^n_-:=\R^{n-1}\times(-\infty,0)$, and
$$ (-\Delta)^s u(x):=c_{n,s}\int_{\R^n}\frac{2u(x)-u(x+y)-u(x-y)}{|y|^{n+2s}}\,dy,$$
where~$c_{n,s}$ is a suitable dimensional constant (its exact value will play no role in this paper).

The nonlocal Neumann condition in~$\R^n_-$ (see~\cite{MR3651008})
is given by
$$ {\mathcal{N}}_s u(x):=c_{n,s}\int_{\R^n_+}\frac{u(x)-u(y)}{|x-y|^{n+2s}}\,dy.$$

To find a solution of~\eqref{pb1} we use a variational argument. For this, we let
$$ Q(\R^n_+):=(\R^n_+\times\R^n_+)\cup(\R^n_-\times\R^n_+)\cup(\R^n_+\times\R^n_-)$$
and define
\begin{equation}\label{SDEF} {\mathfrak{S}}_s:=\inf_{{{\|u\|_{L^{2^*_s}(\R^n_+)}=1}}}
\frac{c_{n,s}}{2}\iint_{Q(\R^{n}_+)}\frac{(u(x)-u(y))^2}{|x-y|^{n+2s}}\,dx\,dy.
\end{equation}
The main result of this paper reads as follows.

\begin{theorem}\label{EXIST:GRS} The infimum in~\eqref{SDEF} is attained. 
\end{theorem}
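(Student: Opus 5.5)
Since the norm and the constraint involve only $u|_{\R^n_+}$ and the energy on $\R^n_-\times\R^n_-$ is not counted, we may first reduce to a problem for $u|_{\R^n_+}$. For fixed values in $\R^n_+$, the optimal extension to $\R^n_-$ minimizes, pointwise in $x\in\R^n_-$, the quantity $\int_{\R^n_+}\frac{(u(x)-u(y))^2}{|x-y|^{n+2s}}dy$. This is exactly the condition ${\mathcal N}_s u=0$, namely
$$u(x)=\frac{\int_{\R^n_+}u(y)|x-y|^{-n-2s}dy}{\int_{\R^n_+}|x-y|^{-n-2s}dy}.$$
Hence we may restrict to such ``Neumann extensions''. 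We also replace $u$ by $|u|$, which does not increase the energy, so that we can work with nonnegative functions.

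The plan is a concentration-compactness argument with a strict-inequality criterion. First we compare ${\mathfrak S}_s$ with the fractional Sobolev constant $S$ of $\R^n$. We take a bubble $U_\e(x)=\e^{-(n-2s)/2}U((x-x_0)/\e)$ with $x_0$ interior and $\e\to0$. Its energy on $Q(\R^n_+)$ is at most $\frac{c_{n,s}}2\iint_{\R^n\times\R^n}$, which tends to $S$, while its $L^{2^*_s}(\R^n_+)$ norm tends to $1$. This gives ${\mathfrak S}_s\le S$. The key step is to prove the strict inequality ${\mathfrak S}_s<S$, and we plan to test with a bubble centered on the boundary, $x_0\in\partial\R^n_+$. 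With half of the bubble in $\R^n_+$, the $L^{2^*_s}(\R^n_+)$ norm raised to $2^*_s$ is exactly $1/2$, by symmetry. The energy counts the $++$ interactions, which are half of the $++$ and $--$ part, plus the full cross term. Writing $E=E_{++}+E_{--}+2E_{+-}$ with $E_{++}=E_{--}$ gives
$$\text{energy}=E_{++}+2E_{+-}=\tfrac{E}{2}+E_{+-}.$$
The quotient is therefore $(\frac E2+E_{+-})/2^{-2/2^*_s}$. This is not obviously below $S$, so a plain bubble likely fails. We would instead use the Neumann extension of the half-bubble, which strictly decreases $E_{+-}$ (terms counted twice) relative to the reflection. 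If that is still insufficient, we would try the other direction of comparison: relate ${\mathfrak S}_s$ to a quantity for which the infimum is attained. I expect this estimate to be the main obstacle, since the nonlocal Neumann energy does not simply halve.

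Given the strict inequality, we take a minimizing sequence $u_k\ge0$ with Neumann extensions, $\|u_k\|_{L^{2^*_s}(\R^n_+)}=1$, and use the invariance of the problem under translations parallel to $\partial\R^n_+$ and under dilations about boundary points. We normalize so that $\sup_{z\in\partial\R^n_+}\int_{B_1(z)\cap\R^n_+}|u_k|^{2^*_s}=\frac12$ (Lévy concentration function). The energy controls the $H^s(\R^n_+)$ Gagliardo seminorm, so a subsequence converges weakly and locally in $L^2$ to some $u$. We then run Lions' concentration-compactness for the measures $|u_k|^{2^*_s}dx$ and the energy densities. Vanishing is excluded by the normalization. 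Dichotomy is excluded by strict subadditivity, since the $2/2^*_s<1$ homogeneity makes splitting mass costly. Atoms at interior points are excluded because their cost is at least $S\ge{\mathfrak S}_s$. Atoms at boundary points are excluded through the local version of the Sobolev inequality with constant ${\mathfrak S}_s$, combined with the normalization that gives mass at most $\frac12$ in unit balls. This handles the Brezis–Lieb splitting $1=\|u\|^{2^*_s}+\sum\nu_j$ with energy $\ge{\mathfrak S}_s(\|u\|^2+\sum\nu_j^{2/2^*_s})$, which forces a single piece; the normalization then forces $u\neq0$ and $\|u\|=1$. Lower semicontinuity of the energy via Fatou on $Q(\R^n_+)$ gives that $u$ is a minimizer.

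The remaining technical points are the tightness of the tails at infinity, which needs a cutoff commutator estimate for the nonlocal energy on $Q(\R^n_+)$ (standard fractional Leibniz bounds), and the fact that the minimizing sequence satisfies a Sobolev-type inequality on $\R^n_+$ for functions with finite $Q$-energy, namely ${\mathfrak S}_s>0$. For the latter we use a fractional extension operator from $H^s(\R^n_+)$ to $H^s(\R^n)$ together with the standard Sobolev inequality.
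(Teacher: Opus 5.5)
Your proposal has a genuine gap exactly where you flag the main obstacle: the strict inequality ${\mathfrak S}_s<S$ (with $S={\mathfrak S}_s^\star$) is never proved. For the boundary-centered half-bubble you cannot show the quotient is below $S$, because of the cross term $E_{+-}$. Passing to its Neumann extension lowers $E_{+-}$ only by an unquantified amount, and ``comparing in the other direction'' is not an argument. The whole compactness scheme rests on this inequality. An interior atom of mass $\nu$ costs $S\,\nu^{2/2^*_s}$, and a concentrating bubble attains this. So with only $S\ge{\mathfrak S}_s$, which is what you wrote when excluding interior atoms, nothing stops a minimizing sequence from concentrating at a point of $\R^n_+$. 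Your boundary-centered L\'evy normalization does not rule this out either: a full-mass atom at height exactly $1$, such as $e_n$, is compatible with mass $\frac12$ in $B_1(0)$ and at most $\frac12$ in every $B_1(z)$, $z\in\partial\R^n_+$. As it stands, the proposal does not prove the theorem.

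The missing idea is that the interior bubble you already used for ${\mathfrak S}_s\le S$ gives the strict inequality once you expand to next order instead of passing to the limit. Since $Q(\R^n_+)=\R^{2n}\setminus(\R^n_-\times\R^n_-)$, the energy of $U_\e$ on $Q(\R^n_+)$ equals $S$ minus $\frac{c_{n,s}}2\iint_{\R^n_-\times\R^n_-}\frac{(U_\e(x)-U_\e(y))^2}{|x-y|^{n+2s}}\,dx\,dy$. Away from $x_0$ we have $U_\e(x)\approx\e^{\frac{n-2s}2}|x-x_0|^{-(n-2s)}$, so this discarded term is at least $c\,\e^{n-2s}$. The lost mass is only $\int_{\R^n_-}U_\e^{2^*_s}\le C\e^{n}$. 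Hence ${\mathfrak S}_s(1-C\e^n)\le S-c\,\e^{n-2s}$, which gives ${\mathfrak S}_s<S$ for small $\e$ because $n-2s<n$. This is exactly Theorem~\ref{INterM0} in the paper, done there with a unit bubble centered at $de_n$ and $d\to+\infty$, which is equivalent by scaling. With this in hand, your Lions-type concentration-compactness is a reasonable alternative to the paper's route. The paper instead uses Schwarz symmetrization in $x'$, a dilation fixing mass $\vartheta$ in $B_1$, and Helly's theorem to select continuity radii. It then splits the energy when the weak limit vanishes and uses radial monotonicity to force the leftover mass near the interior point $e_n$, where it costs ${\mathfrak S}_s^\star$. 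Note also that your cutoff/commutator estimates on $Q(\R^n_+)$ need uniform control of $u_k$ on $\R^n_-$, which your sketch does not address. The paper gets this from the $L^{2^*_s}$-boundedness of the Neumann extension operator (Lemma~\ref{NKDF:HO}).
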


\begin{corollary}\label{0sqpjdlwbmlgtmlno2CO}
There exists a nontrivial weak solution~$u$ of~\eqref{pb1}.
\end{corollary}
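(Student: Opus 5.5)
The proof of Corollary~\ref{0sqpjdlwbmlgtmlno2CO} follows from Theorem~\ref{EXIST:GRS} by a standard Lagrange multiplier argument combined with a rescaling.

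\emph{Choosing a nonnegative minimizer.} Let $v$ be a minimizer of \eqref{SDEF}, so that $\|v\|_{L^{2^*_s}(\R^n_+)}=1$. For every $x,y$ we have $\big||v(x)|-|v(y)|\big|\le|v(x)-v(y)|$. Hence $|v|$ is admissible and has energy no larger than that of $v$, so it is also a minimizer. We therefore assume $v\ge0$.

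\emph{Euler--Lagrange equation.} Consider the energy
$$ E(u):=\frac{c_{n,s}}{2}\iint_{Q(\R^n_+)}\frac{(u(x)-u(y))^2}{|x-y|^{n+2s}}\,dx\,dy $$
on the natural space $X$ of measurable functions with $E(u)<\infty$ and $u\in L^{2^*_s}(\R^n_+)$. For $\varphi\in X$, differentiate at $t=0$ the quotient $E(v+t\varphi)/\|v+t\varphi\|^2_{L^{2^*_s}(\R^n_+)}$. This derivative vanishes, which gives
$$ c_{n,s}\iint_{Q(\R^n_+)}\frac{(v(x)-v(y))(\varphi(x)-\varphi(y))}{|x-y|^{n+2s}}\,dx\,dy={\mathfrak S}_s\int_{\R^n_+}v^{2^*_s-1}\varphi\,dx .$$
The left-hand side is exactly the weak formulation of $(-\Delta)^s v$ in $\R^n_+$ together with ${\mathcal N}_s v=0$ in $\R^n_-$. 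Indeed, the integration by parts formula of~\cite{MR3651008} converts this bilinear form into $\int_{\R^n_+}\varphi(-\Delta)^s v+\int_{\R^n_-}\varphi\,{\mathcal N}_s v$. Since $\varphi$ is unrestricted in $\R^n_-$, the Neumann condition is built into the weak notion of solution.

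\emph{Scaling.} The operator is $1$-homogeneous while the nonlinearity is $(2^*_s-1)$-homogeneous, and $2^*_s-1>1$. Set $u:=\lambda v$ with $\lambda^{2^*_s-2}={\mathfrak S}_s$. Then $u$ satisfies the weak form of \eqref{pb1}. It is nonnegative, so $u^{2^*_s-1}$ is meaningful, and it is nontrivial because $\|v\|_{L^{2^*_s}(\R^n_+)}=1$. Here we need ${\mathfrak S}_s>0$, which holds since a minimizer $v\ge0$ with unit norm cannot have zero energy; otherwise it would be constant and hence not in $L^{2^*_s}(\R^n_+)$.

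\emph{Main delicate point.} The only step requiring care is differentiability. We must check that $t\mapsto\|v+t\varphi\|^{2^*_s}_{L^{2^*_s}(\R^n_+)}$ is differentiable with derivative $2^*_s\int_{\R^n_+}|v|^{2^*_s-2}v\varphi\,dx$. This follows from dominated convergence, since $2^*_s>1$.
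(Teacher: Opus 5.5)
Your proof is correct and follows the paper's argument: pass to $|v|$, take the first variation of the Sobolev quotient, and rescale. The paper uses $C^\infty_c(\R^n)$ test functions and expands $\mathfrak S_s\|u+\e\phi\|^2_{L^{2^*_s}(\R^n_+)}\le E(u+\e\phi)$ to first order, which is the same computation. There is one harmless slip: the derivative at $t=0$ of $\|v+t\varphi\|^2_{L^{2^*_s}(\R^n_+)}$ is $2\int_{\R^n_+}v^{2^*_s-1}\varphi\,dx$, so the right-hand side of your Euler--Lagrange identity should be $2\mathfrak S_s\int_{\R^n_+}v^{2^*_s-1}\varphi\,dx$. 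This only changes the normalizing factor; the paper uses $(2\mathfrak S_s)^{\frac{1}{2^*_s-2}}$.
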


It is also useful to compare the infimum in~\eqref{SDEF}
with the corresponding quantity in the whole of~$\R^n$. More specifically, we define
\begin{equation}\label{CCLSDMN}\begin{split} {\mathfrak{S}}_s^\star&:=\inf_{{\|u\|_{L^{2^*_s}(\R^n)}=1}}
\frac{c_{n,s}}{2}\iint_{\R^{2n}}\frac{(u(x)-u(y))^2}{|x-y|^{n+2s}}\,dx\,dy
.\end{split}\end{equation}

The key ingredient in the proof of Theorem~\ref{EXIST:GRS} is a strict comparison between the halfspace and whole-space critical levels.

\begin{theorem}\label{INterM0}
We have that
\begin{equation}\label{EWNS:02oe-034.P}
{\mathfrak{S}}_s^\star>{\mathfrak{S}}_s.
\end{equation}
\end{theorem}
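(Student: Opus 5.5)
The approach is to test the quotient in~\eqref{SDEF} against a whole-space extremal, translated far into~$\R^n_+$, and to keep its values in~$\R^n_-$. I assume the standard fact that~${\mathfrak{S}}_s^\star$ in~\eqref{CCLSDMN} is attained by the fractional bubble~$U(x)=\kappa\,(1+|x|^2)^{-\frac{n-2s}{2}}$, normalized so that~$\|U\|_{L^{2^*_s}(\R^n)}=1$, with energy~$E:=\frac{c_{n,s}}2\iint_{\R^{2n}}\frac{(U(x)-U(y))^2}{|x-y|^{n+2s}}\,dx\,dy={\mathfrak{S}}_s^\star$. For~$d>0$, set~$e_n:=(0,\dots,0,1)$ and~$u_d(x):=U(x-de_n)$. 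I use~$u_d$ itself, defined on all of~$\R^n$, as a competitor in~\eqref{SDEF}, after normalizing it in~$L^{2^*_s}(\R^n_+)$. Since~$Q(\R^n_+)=\R^{2n}\setminus(\R^n_-\times\R^n_-)$,
$$\frac{c_{n,s}}2\iint_{Q(\R^n_+)}\frac{(u_d(x)-u_d(y))^2}{|x-y|^{n+2s}}\,dx\,dy=E-A(d),\qquad A(d):=\frac{c_{n,s}}2\iint_{\R^n_-\times\R^n_-}\frac{(u_d(x)-u_d(y))^2}{|x-y|^{n+2s}}\,dx\,dy .$$
For the constraint, write~$\|u_d\|^{2^*_s}_{L^{2^*_s}(\R^n_+)}=1-m(d)$, where~$m(d):=\int_{\R^n_-}u_d^{2^*_s}$. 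Then
$$ {\mathfrak{S}}_s\le \frac{E-A(d)}{(1-m(d))^{2/2^*_s}}, $$
and it suffices to find~$d$ for which the right-hand side is strictly smaller than~$E$.

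The mass term is easy to control. Since~$u_d^{2^*_s}(x)\lesssim|x-de_n|^{-2n}$ and~$|x-de_n|\ge d+|x_n|$ on~$\R^n_-$, integrating first in~$x'$ and then in~$x_n$ gives~$m(d)\le C d^{-n}$. Consequently~$(1-m(d))^{-2/2^*_s}\le 1+Cd^{-n}$ for large~$d$.

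The key step is a lower bound~$A(d)\ge c\,d^{-(n-2s)}$ with~$c>0$ independent of~$d$. The heuristic is that the energy density of the bubble decays like~$|x|^{-2n+2s}$ (for instance the local part~$|\nabla U|^2|x|^{2-2s}$), and integrating this over the region at distance at least~$d$ gives order~$d^{2s-n}$. To make this rigorous I would restrict the integral defining~$A(d)$ to the pairs~$x\in B_{d/2}(-de_n)$ and~$y\in B_{d/2}(-2de_n)$. Both balls lie in~$\R^n_-$, and for such pairs~$|x-y|\le 2d$. Moreover, radial monotonicity and the explicit form of~$U$ give~$u_d(x)-u_d(y)\ge c\,d^{-(n-2s)}$ there, because the two points are at distances comparable to~$2d$ and~$3d$ from~$de_n$ and the profile~$|z|^{-(n-2s)}$ separates these scales by a fixed factor. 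Altogether,
$$A(d)\ge c\,d^{-2(n-2s)}\,d^{-(n+2s)}\,|B_{d/2}|^2=c\,d^{-n+2s}.$$
I expect checking this difference estimate uniformly in~$d$ to be the only delicate point, though it reduces to an elementary comparison of~$(1+|z|^2)^{-\frac{n-2s}2}$ at two separated radii.

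Combining the two estimates,
$$ {\mathfrak{S}}_s\le \bigl(E-c\,d^{-(n-2s)}\bigr)\bigl(1+Cd^{-n}\bigr)\le E-c\,d^{-(n-2s)}+C'd^{-n}. $$
Since~$2s>0$, the exponent~$n-2s$ is smaller than~$n$, so the negative term~$-c\,d^{-(n-2s)}$ dominates~$C'd^{-n}$ for~$d$ large. Choosing~$d$ large therefore gives~${\mathfrak{S}}_s<E={\mathfrak{S}}_s^\star$, which is~\eqref{EWNS:02oe-034.P}. The mechanism is that the nonlocal energy lost by discarding~$\R^n_-\times\R^n_-$ decays only polynomially, at the rate~$d^{2s-n}$, which is strictly slower than the rate~$d^{-n}$ at which the~$L^{2^*_s}$ mass leaks into~$\R^n_-$.
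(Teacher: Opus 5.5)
Your proposal follows the paper's proof: the same translated bubble used as a competitor on all of $\R^n$, the same splitting of the energy into the whole-space energy minus the $\R^n_-\times\R^n_-$ contribution, the same $C d^{-n}$ bound on the mass leaking into $\R^n_-$, the same $c\,d^{-(n-2s)}$ lower bound on the discarded energy obtained from two balls of size comparable to $d$, and the same comparison of the rates $d^{2s-n}$ and $d^{-n}$.

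One step is false as written and needs a small correction. Your balls $B_{d/2}(-de_n)$ and $B_{d/2}(-2de_n)$ are tangent at $-\frac32 d\,e_n$. For pairs $x,y$ near that point, both $|x-de_n|$ and $|y-de_n|$ are close to $\frac52 d$, so $u_d(x)-u_d(y)\to0$. Hence the claimed uniform bound $u_d(x)-u_d(y)\ge c\,d^{-(n-2s)}$ on the whole product of balls does not hold. This is not a matter of uniformity in $d$.

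The fix is immediate. One option is to shrink the radii to $d/4$; then $|x-de_n|\le\frac94 d<\frac{11}4 d\le|y-de_n|$, while $|x-y|\le 2d$ and the measure is still of order $d^{2n}$. The other is to separate the balls as the paper does: after rescaling the untranslated bubble by $d$, it takes $X\in B_1(-2e_n)$ and $Y\in B_1(-5e_n)$, so that $|X|\le3<4\le|Y|$. Nothing else in your argument changes.
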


The existence statement in Corollary~\ref{0sqpjdlwbmlgtmlno2CO} and the strict comparison in Theorem~\ref{INterM0} were obtained independently and contemporaneously in~\cite[Theorems~1.2 and~1.3]{cinti2026existencenonexistenceresultsfractional}, by a different approach. Prior to that, Musina and Nazarov~\cite[Theorem~1.3]{MR4193437} proved that the strict inequality between the halfspace and whole-space fractional Sobolev constants implies attainment of the halfspace infimum. Their work then identified regimes in which the strict inequality can be verified, in particular when~$n=1$ and~$s$ is sufficiently close to~$\left(\frac12\right)^-$, or when~$n\ge2$ and~$s$ is sufficiently close to~$1^-$,
see~\cite[Theorem~4.7]{MR4193437}. Instead,
Theorem~\ref{INterM0} here allows us to establish the attainment of the minimizer for every~$s\in(0,1)$ and~$n>2s$.

It is instructive to compare this problem with its local counterpart. In the classical critical case, the whole-space Sobolev extremals are the Aubin--Talenti bubbles~\cite{MR448404,MR463908}. By centering such a bubble on the boundary hyperplane and restricting it to the halfspace, in the local case one immediately obtains a solution of the corresponding local Neumann problem. This simple construction is no longer available in the present nonlocal setting: restricting a whole-space fractional bubble to the halfspace does not, in general, satisfy the exterior nonlocal Neumann condition. A genuinely nonlocal variational argument is therefore required.

\subsection{Future directions of investigation}
A natural continuation of the present work is the corresponding critical Neumann problem on a bounded smooth domain~$\Omega\subset\R^n$. The local critical Neumann problem has a substantial literature, beginning with early variational existence results of Adimurthi--Yadava and Adimurthi--Mancini and followed by the analysis of least-energy solutions, boundary concentration, and the role of the geometry of~$\partial\Omega$; see~\cite{AdimurthiYadava1990,AdimurthiMancini1991,NiPanTakagi1992,AdimurthiPacellaYadava1993,AdimurthiMancini1994,AdimurthiManciniYadava1995,AdimurthiPacellaYadava1995}. In particular, the large-parameter analysis shows that least-energy solutions concentrate at the boundary and that the mean curvature governs the location of concentration points~\cite{AdimurthiPacellaYadava1993,AdimurthiManciniYadava1995,AdimurthiPacellaYadava1995}. Motivated by this classical picture, in a forthcoming paper we will consider, for~$\lambda>0$,
\begin{equation}\label{bounded-problem}
\begin{cases}
(-\Delta)^s u+\lambda u=u^{2^*_s-1} & \text{in }\Omega,\\
\mathcal N_su=0 & \text{in }\R^n\setminus\Omega.
\end{cases}
\end{equation}
The associated critical level is
\begin{equation}\label{bounded-level}
\mathfrak S_{s,\lambda}(\Omega):=
\inf_{u\not\equiv0}
\frac{\displaystyle \frac{c_{n,s}}2\iint_{Q(\Omega)}
\frac{(u(x)-u(y))^2}{|x-y|^{n+2s}}\,dx\,dy
+\lambda\int_\Omega u^2\,dx}
{\displaystyle \|u\|_{L^{2^*_s}(\Omega)}^2},
\end{equation}
where
\[
Q(\Omega):=\R^{2n}\setminus\big((\R^n\setminus\Omega)\times(\R^n\setminus\Omega)\big).
\]

We expect that the halfspace level obtained in the present paper is precisely the compactness threshold for this bounded-domain problem. More specifically, a natural goal is to prove that
the condition
\begin{equation}\label{bounded-threshold}
\mathfrak S_{s,\lambda}(\Omega)<\mathfrak S_s
\end{equation}
implies that the infimum in~\eqref{bounded-level} is attained. Indeed, boundary concentration should produce, after blow-up, a profile in the halfspace and therefore carry at least the energy~$\mathfrak S_s$, whereas an interior concentration carries the larger whole-space energy~$\mathfrak S_s^\star>\mathfrak S_s$. Thus, the strict inequality~\eqref{bounded-threshold} should rule out loss of compactness and yield a minimizer, which, after a suitable normalization, provides a nontrivial solution of~\eqref{bounded-problem}. In the spirit of the classical large-$\lambda$ theory~\cite{NiPanTakagi1992,AdimurthiPacellaYadava1993,AdimurthiMancini1994,AdimurthiManciniYadava1995,AdimurthiPacellaYadava1995}, the main further issue is then to determine geometric conditions on~$\Omega$ under which~\eqref{bounded-threshold} holds for large~$\lambda$.

A natural strategy for proving~\eqref{bounded-threshold} is to translate and rescale a halfspace minimizer near a point of~$\partial\Omega$ and use it as a test function in~\eqref{bounded-level}. The first correction to the halfspace energy should then encode the geometry of the domain, in a way which is expected to depend substantially on~$s$. Thus, a detailed understanding of the halfspace ground state is expected to be an essential ingredient in determining whether~$\mathfrak S_{s,\lambda}(\Omega)<\mathfrak S_s$ for large~$\lambda$, and in identifying the boundary points where least-energy solutions may concentrate.
The geometric nature of the first correction is also expected to depend essentially on~$s$
and be related to either the classical or nonlocal mean curvature.
\medskip

For other critical problems in the fractional setting, see e.g.~\cite{MR3060890,MR3089742,MR3271254,MR3379042,MR3635980,MR3766988,MR3914955} and the references therein.

\subsection{Organization of the paper}
The paper is organized as follows. Section~\ref{EXTNE}
considers an extension operator induced by the nonlocal Neumann condition and provides a general bound in Lebesgue spaces.
In Section~\ref{INterM0sec} we prove Theorem~\ref{INterM0}, which provides the strict energy gap used in the compactness argument. Section~\ref{EXIST:GRS:A} is devoted to the proof of Theorem~\ref{EXIST:GRS}, while Corollary~\ref{0sqpjdlwbmlgtmlno2CO} is proved in Section~\ref{0sqpjdlwbmlgtmlno2COse}. Some auxiliary convexity, cutoff, kernel, density, and Sobolev estimates are collected in the appendix.

\section{A useful bound for the nonlocal Neumann condition}\label{EXTNE}

This is a general estimate on an extension operator
induced by the nonlocal Neumann condition.

\begin{lemma}\label{NKDF:HO} Let~$u:\R^n_+\to\R$ and, for all~$x\in\R^n_-$, let
$$ T_u(x):=\frac{\displaystyle\int_{\R^n_+}\frac{u(z)}{|x-z|^{n+2s}}\,dz}{\displaystyle\int_{\R^n_+}\frac{dz}{|x-z|^{n+2s}}}.$$
Then, for all~$p\in(1,+\infty]$, 
\begin{equation}\label{0pkjdmwr034iyhk92IKStuMD.3} \|T_u\|_{L^p(\R^n_-)}\le C\,\|u\|_{L^p(\R^n_+)},\end{equation}
for some~$C>0$ depending only on~$n$, $s$, and~$p$.
\end{lemma}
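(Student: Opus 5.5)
The approach is to view $T_u$ as an averaging operator and reduce the estimate to a maximal-function bound. First I compute the denominator. For $x=(x',x_n)$ with $x_n<0$, set $d:=|x_n|$. Scaling gives
$$\int_{\R^n_+}\frac{dz}{|x-z|^{n+2s}}=\kappa\, d^{-2s},$$
with $\kappa=\kappa(n,s)>0$. Hence
$$T_u(x)=\kappa^{-1} d^{2s}\int_{\R^n_+}\frac{u(z)}{|x-z|^{n+2s}}\,dz.$$
The kernel $K(x,z):=\kappa^{-1}d^{2s}|x-z|^{-n-2s}$ is positive and satisfies $\int_{\R^n_+}K(x,z)\,dz=1$. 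The case $p=+\infty$ is therefore immediate with $C=1$, since $|T_u|\le T_{|u|}\le\|u\|_{L^\infty}$. From now on I take $p\in(1,+\infty)$ and assume $u\ge0$ without loss of generality.

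Next I bound $T_u$ by a Hardy--Littlewood maximal function. Extend $u$ by zero to $\R^n$ and call the result $\tilde u$. Decompose $\R^n_+$ into the ball $B_{2d}(x)$ and the dyadic annuli $A_k:=B_{2^{k+1}d}(x)\setminus B_{2^kd}(x)$ for $k\ge1$. On $B_{2d}(x)\cap\R^n_+$ we have $|x-z|\ge d$, so this contribution is bounded by
$$C d^{-n}\int_{B_{2d}(x)}\tilde u\le C\,M\tilde u(x).$$
On $A_k$ the kernel is at most $C d^{2s}(2^kd)^{-n-2s}$, so this piece contributes at most $C2^{-2sk}M\tilde u(x)$. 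Summing the geometric series gives $T_u(x)\le C\,M\tilde u(x)$ for every $x\in\R^n_-$.

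The Hardy--Littlewood maximal theorem with $p>1$ then yields
$$\|T_u\|_{L^p(\R^n_-)}\le C\|M\tilde u\|_{L^p(\R^n)}\le C\|\tilde u\|_{L^p(\R^n)}=C\|u\|_{L^p(\R^n_+)}.$$
This is exactly where the restriction $p>1$ is used.

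I do not expect a serious obstacle. The only point needing care is the pointwise maximal bound, specifically checking that the factor $d^{2s}$ exactly compensates the kernel's decay so that the dyadic sum converges; it does, since $s>0$. As an alternative, one could use Schur's test. One row integral, $\int_{\R^n_+}K(x,z)\,dz=1$, is already computed. The other would require weights, since $\int_{\R^n_-}K(x,z)\,dx$ diverges logarithmically near the boundary. That divergence is why I prefer the maximal function route, which avoids any weight choice.
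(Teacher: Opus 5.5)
Your proof is correct, and it takes a genuinely different route from the paper's. The pointwise domination $|T_u(x)|\le C\,M\tilde u(x)$ on $\R^n_-$ is sound. Since $|x-z|>d$ for every $z\in\R^n_+$, the kernel is at most $\kappa^{-1}d^{-n}$ on $B_{2d}(x)$, and the $k$-th annulus contributes at most $C2^{-2sk}M\tilde u(x)$, which is summable because $s>0$.

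The paper instead stays with elementary inequalities. It freezes the normal variable and writes the numerator as $\int_0^{+\infty}u(\cdot,\zeta)*K_{|x_n-\zeta|}\,d\zeta$, with $K_\theta(y')=(|y'|^2+\theta^2)^{-\frac{n+2s}{2}}$. It then applies Young's inequality in $\R^{n-1}$, using $\|K_\theta\|_{L^1(\R^{n-1})}=C\theta^{-1-2s}$, together with Minkowski's integral inequality. This reduces the claim to a one-dimensional bound on $L^p(0,+\infty)$ for $U\mapsto\eta^{2s}\int_0^{+\infty}U(\zeta)(\eta+\zeta)^{-1-2s}\,d\zeta$, whose kernel is homogeneous of degree $-1$. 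That bound is proved by the substitution $\zeta=\eta\sigma$ and Minkowski again, with constant proportional to $\int_0^{+\infty}\sigma^{-1/p}(1+\sigma)^{-1-2s}\,d\sigma$.

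What each buys: your argument is shorter and gives more, namely a pointwise bound and hence also a weak-type $(1,1)$ estimate. However, it outsources the real work to the Hardy--Littlewood maximal theorem. The paper's argument is self-contained and gives an explicit constant. It also shows transparently that $p>1$ is needed exactly for the integrability of $\sigma^{-1/p}$ at the origin, consistent with the $p=1$ counterexample it gives after the proof.

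One small correction to your closing aside. The column integral $\int_{\R^n_-}K(x,z)\,dx$ diverges logarithmically as $|x_n|\to+\infty$, not near the boundary. After integrating out $x'$, the integrand behaves like $|x_n|^{2s}(z_n+|x_n|)^{-1-2s}\sim|x_n|^{-1}$; this is precisely the computation the paper uses for $p=1$. Moreover, the paper's rescaling argument in the normal variable does what a power-weighted Schur test would do, so the weight choice is not a real obstacle there.
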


\begin{proof} It is immediate that~$ \|T_u\|_{L^\infty(\R^n_-)}\le\|u\|_{L^\infty(\R^n_+)}$,
hence we can focus on the case~$p\in(1,+\infty)$.

Given~$\theta>0$ and~$y'\in\R^{n-1}$, we define
$$ K_\theta(y'):=\frac{1}{\big(|y'|^2+\theta^2\big)^{\frac{n+2s}2}}$$
and observe that, by scaling,
\begin{equation}\label{FGHANS:02}
\|K_\theta\|_{L^1(\R^{n-1})}=\int_{\R^{n-1}}
\frac{dy'}{\big(|y'|^2+\theta^2\big)^{\frac{n+2s}2}}=\frac{C^\star_{n,s}}{\theta^{1+2s}},
\end{equation}for some positive constant~$C^\star_{n,s}$.

Also, given~$z'\in\R^{n-1}$ and~$\zeta\in(0,+\infty)$, we use the notation~$u^{[\zeta]}(z'):=u(z',\zeta)$.
Hence, for all~$p\in[1,+\infty]$, a standard inequality for convolutions (see e.g.~\cite[Theorem~9.1]{MR3381284}) yields that
\begin{equation}\label{FGHANS:01}
\| u^{[\zeta]}*K_\theta\|_{L^p(\R^{n-1})}\le \| u^{[\zeta]}\|_{L^p(\R^{n-1})} \|K_\theta\|_{L^1(\R^{n-1})}.\end{equation}

Now we let
\begin{eqnarray*} S_u(x)&:=&\int_{\R^n_+} \frac{u(z)}{|x-z|^{n+2s}}\,dz\\&=&
\int_0^{+\infty} \left(
\int_{\R^{n-1}} \frac{u(z',\zeta)}{\big(|x'-z'|^2+|x_n-\zeta|^2\big)^{\frac{n+2s}2}}\,dz'
\right)\,d\zeta\\&=&
\int_0^{+\infty} 
u^{[\zeta]}*K_{|x_n-\zeta|}(x')\,d\zeta.
\end{eqnarray*}
Accordingly, for every~$\eta\in(0,+\infty)$,
\begin{eqnarray*} \|S_u^{[-\eta]}\|_{L^{p}(\R^{n-1})}&=&
\left(\int_{\R^{n-1}} \left| \int_0^{+\infty} u^{[\zeta]}*K_{\eta+\zeta}(w')\,d\zeta\right|^p\,dw' \right)^{\frac1p}
\\&\le&\int_0^{+\infty}
\left(\int_{\R^{n-1}} \left|  u^{[\zeta]}*K_{\eta+\zeta}(w')\right|^p\,dw' \right)^{\frac1p}
\,d\zeta,
\end{eqnarray*}
where the last step is a consequence of the Minkowski Integral Inequality (see e.g.~\cite[Theorem~A.1]{MR4999446}).

Consequently, by~\eqref{FGHANS:01},
\begin{equation*}
\|S_u^{[-\eta]}\|_{L^{p}(\R^{n-1})}\le \int_0^{+\infty} \|  u^{[\zeta]}*K_{\eta+\zeta}\|_{L^p(\R^{n-1})} \,d\zeta
\le \int_0^{+\infty} \| u^{[\zeta]}\|_{L^p(\R^{n-1}) }\|K_{\eta+\zeta}\|_{L^1(\R^{n-1})}\,d\zeta.\end{equation*}
{F}rom this and~\eqref{FGHANS:02} it follows that
\begin{equation*}
\|S_u^{[-\eta]}\|_{L^{p}(\R^{n-1})}\le C^\star_{n,s}
\int_0^{+\infty} \frac{ \| u^{[\zeta]}\|_{L^p(\R^{n-1})} }{ (\eta+\zeta)^{1+2s}} \,d\zeta
=C^\star_{n,s} \int_0^{+\infty}
\frac{ U(\zeta) }{ (\eta+\zeta)^{1+2s}} \,d\zeta
,\end{equation*}
where~$U(\zeta):=\| u^{[\zeta]}\|_{L^p(\R^{n-1})}$.

Thus, employing the change of variable~$\sigma:=\frac\zeta\eta$ and
using again the Minkowski Integral Inequality,
\begin{equation}\label{0pkjdmwr034iyhk92IKStuMD}
\begin{split}
\left(\int_0^{+\infty} \eta^{2sp}\|S_u^{[-\eta]}\|_{L^{p}(\R^{n-1})}^p\,d\eta\right)^{\frac1p}
&\le C^\star_{n,s}\left( \int_0^{+\infty}\left|
\int_0^{+\infty} \frac{\eta^{2s} U(\zeta) }{ (\eta+\zeta)^{1+2s}} \,d\zeta\right|^p\,d\eta\right)^{\frac1p}
\\& =C^\star_{n,s}\left( \int_0^{+\infty}\left|
\int_0^{+\infty} \frac{ U(\eta\sigma) }{ (1+\sigma)^{1+2s}} \,d\sigma\right|^p\,d\eta\right)^{\frac1p}
\\&\le C^\star_{n,s} \int_0^{+\infty} \left( \int_0^{+\infty}
\frac{ |U(\eta\sigma)|^p }{(1+\sigma)^{(1+2s)p}}\,d\eta\right)^{\frac1p} \,d\sigma
,\end{split}\end{equation}
for some~$C>0$, depending only on~$n$, $s$, and~$p$.

It is also useful to observe that, substituting for~$x_n:=\eta\sigma$,
\begin{eqnarray*}&&
\int_0^{+\infty} |U(\eta\sigma)|^p \,d\eta=
\int_0^{+\infty} \| u^{[\eta\sigma]}\|_{L^p(\R^{n-1})}^p\,d\eta=
\int_{\R^n_+} |u(x',\eta\sigma)|^p \,dx'\,d\eta\\
&&\qquad=\frac1\sigma\int_{\R^n_+} |u(x)|^p \,dx=\frac{\|u\|_{L^p(\R^n_+)}^p}{\sigma}.
\end{eqnarray*}
Plugging this information into~\eqref{0pkjdmwr034iyhk92IKStuMD} we arrive at
\begin{equation}\label{0pkjdmwr034iyhk92IKStuMD.2}
\begin{split}
\left(\int_0^{+\infty} \eta^{2sp}\|S_u^{[-\eta]}\|_{L^{p}(\R^{n-1})}^p\,d\eta\right)^{\frac1p}
&\le C^\star_{n,s}\,\|u\|_{L^p(\R^n_+)} \int_0^{+\infty} \frac{d\sigma}{\sigma^{\frac1p}(1+\sigma)^{1+2s}}\\
&=\widetilde{C}_{n,s,p}\,\|u\|_{L^p(\R^n_+)}
,\end{split}\end{equation}
for some constant~$\widetilde{C}_{n,s,p}>0$ (here we are using that~$p>1$ to ensure the convergence at the origin of the above integral).

We also remark that
\begin{eqnarray*}&&
\|T_u\|_{L^p(\R^n_-)}^p=\mathlarger{\mathlarger {\mathlarger{\mathlarger\int}}}_{\R^n_-}\left|
\frac{S_u(x)}{\displaystyle\int_{\R^n_+}\frac{dz}{|x-z|^{n+2s}}}
\right|^p\,dx=\widehat{C}_{n,s}\int_{\R^n_-} |x_n|^{2sp}|S_u(x)|^p\,dx\\
&&\quad=\widehat{C}_{n,s}\int_0^{+\infty}\left(
\int_{\R^{n-1}}\eta^{2sp} |S_u(x',-\eta)|^p\,dx'
\right)\,d\eta=\widehat{C}_{n,s}
\int_0^{+\infty} \eta^{2sp}\|S_u^{[-\eta]}\|_{L^{p}(\R^{n-1})}^p\,d\eta,
\end{eqnarray*}
for some~$\widehat{C}_{n,s}>0$.

This and~\eqref{0pkjdmwr034iyhk92IKStuMD.2} yield~\eqref{0pkjdmwr034iyhk92IKStuMD.3}, as desired.
\end{proof}

We stress that Lemma~\ref{NKDF:HO} does not hold when~$p=1$: indeed, if~$u\ge0$
and~$u\ge1$ in~$(0,1)^n$, the substitution~$y:=\left(\frac{z-x}{z_n-x_n},-x_n\right)$ yields that, for some~$C>0$, possibly varying from line to line,
\begin{eqnarray*}
 \|T_u\|_{L^1(\R^n_-)}&=&C\int_{\R^n_-} \left( \int_{\R^n_+}\frac{|x_n|^{2s} \, u(z)}{|x-z|^{n+2s}}\,dz
 \right)\,dx\\&=&
 C\int_{\R^n_+} \left( \int_{\R^n_-}\frac{|x_n|^{2s} }{|x-z|^{n+2s}}\,dx
 \right) \,u(z)\,dz\\&=& C\int_{\R^n_+} \left( \int_{\R^n_-}\frac{|x_n|^{2s} }{\big(|x'-z'|^2+(z_n-x_n)^2\big)^{\frac{n+2s}2}}\,dx
 \right) \,u(z)\,dz\\&=& C\int_{\R^n_+} \left( \int_{\R^n_+}\frac{y_n^{2s} }{
(z_n+y_n)^{1+2s} \big(|y'|^2+1\big)^{\frac{n+2s}2}}\,dy
 \right) \,u(z)\,dz\\&=& C\int_{\R^n_+} \left( \int_0^{+\infty}\frac{y_n^{2s} }{
(z_n+y_n)^{1+2s} }\,dy_n
 \right) \,u(z)\,dz\\&\ge& C\int_{(0,1)^n} \left( \int_0^{+\infty}\frac{y_n^{2s} }{
(z_n+y_n)^{1+2s} }\,dy_n
 \right) \,dz\\&\ge&  C\int_0^{+\infty}\frac{y_n^{2s} }{
(1+y_n)^{1+2s} }\,dy_n\\&=&+\infty.
\end{eqnarray*}

\section{Proof of Theorem~\ref{INterM0}}\label{INterM0sec}
The proof of Theorem~\ref{INterM0} is based on an explicit energy comparison
with the known minimizers of~\eqref{CCLSDMN}. 

\begin{proof}[Proof of Theorem~\ref{INterM0}] We know that the minimum in \eqref{CCLSDMN}
is attained (see~\cite{MR717827}) by a radially symmetric function~$\omega_s$
with~$\|\omega_s\|_{L^{2^*_s}(\R^n)}^2=1$, given explicitly by
$$ \omega_s(x)=\frac{\alpha_{n,s}}{(1+|x|^2)^{\frac{n-2s}2}},$$
for a suitable normalizing constant~$\alpha_{n,s}>0$.

Given~$d>0$, to be taken conveniently large in what follows, possibly in dependence of~$n$ and~$s$, we let
$$\omega_{s,d}(x',x_n):=\omega_s(x',x_n-d).$$
We observe that if~$x_n<-d$ then
$$1+|x|^2=1+|x'|^2+x_n^2\ge 1+|x'|^2+\frac{x_n^2}2+\frac{d^2}2\ge
\frac{d^2+|x|^2}2$$
and therefore, using the change of variable~$X:=\frac{x}{d}$,
\begin{equation*}\begin{split}&
1-\| \omega_s \|_{L^{2^*_s}(\R^n_+)}^{2^*_s}
=\int_{\R^{n-1}\times(-\infty,-d)} \omega_s^{2^*_s}(x)\,dx
=\alpha_{n,s}^{2^*_s}\int_{\R^{n-1}\times(-\infty,-d)} \frac{dx}{(1+|x|^2)^n}\\&\quad
\le2^n\alpha_{n,s}^{2^*_s}\int_{\R^{n-1}\times(-\infty,-d)} \frac{dx}{(d^2+|x|^2)^n}=
\frac{2^n\alpha_{n,s}^{2^*_s}}{d^n}\int_{\R^{n-1}\times(-\infty,-1)} \frac{dX}{(1+|X|^2)^n}\le \frac{C_{n,s}}{d^n},
\end{split}
\end{equation*} for some~$C_{n,s}\ge 1$, which we take as possibly varying from line to line in what follows.

As a result, assuming that~$d$ is large enough,
\begin{equation}\label{g.SK02w-norm}
\| \omega_s \|_{L^{2^*_s}(\R^n_+)}^{2}\ge \left(1-\frac{C_{n,s}}{d^n}\right)^{\frac{2}{2^*_s}}\ge  1-\frac{C_{n,s}}{d^n}.
\end{equation}

Furthermore, if~$e_n:=(0,\dots,0,1)$, $X\in B_1(-2e_n)$ and~$Y\in B_1(-5e_n)$, then~$|X|\le3$ and~$|Y|\ge4$, therefore
\begin{eqnarray*}
&&\frac{1}{(1+d^2|X|^2)^{\frac{n-2s}2}}-\frac{1}{(1+d^2|Y|^2)^{\frac{n-2s}2}}\ge
\frac{1}{(1+9d^2)^{\frac{n-2s}2}}-\frac{1}{(1+16d^2)^{\frac{n-2s}2}}\\&&\quad=
\frac1{d^{n-2s}}\left(\frac{1}{(d^{-2}+9)^{\frac{n-2s}2}}-\frac{1}{(d^{-2}+16)^{\frac{n-2s}2}}\right)\ge\frac1{2d^{n-2s}}\left(\frac{1}{9^{\frac{n-2s}2}}-\frac{1}{16^{\frac{n-2s}2}}\right),
\end{eqnarray*}
as long as~$d$ is large enough.

As a result,
\begin{equation}\label{g.SK02w2}\begin{split}&
\frac1{\alpha_{n,s}^2}\iint_{(\R^{n-1}\times(-\infty,-d))^2}\frac{(\omega_{s}(x)-\omega_{s}(y))^2}{|x-y|^{n+2s}}\,dx\,dy\\&\quad=
\iint_{(\R^{n-1}\times(-\infty,-d))^2}
\left(\frac{1}{(1+|x|^2)^{\frac{n-2s}2}}-\frac{1}{(1+|y|^2)^{\frac{n-2s}2}}\right)^2
\frac{dx\,dy}{|x-y|^{n+2s}}\\&\quad={d^{n-2s}}
\iint_{(\R^{n-1}\times(-\infty,-1))^2}
\left(\frac{1}{(1+d^2|X|^2)^{\frac{n-2s}2}}-\frac{1}{(1+d^2|Y|^2)^{\frac{n-2s}2}}\right)^2
\frac{dX\,dY}{|X-Y|^{n+2s}}\\&\quad\ge{d^{n-2s}}
\iint_{B_1(-2e_n)\times B_1(-5e_n)}
\left(\frac{1}{(1+d^2|X|^2)^{\frac{n-2s}2}}-\frac{1}{(1+d^2|Y|^2)^{\frac{n-2s}2}}\right)^2
\frac{dX\,dY}{|X-Y|^{n+2s}}\\&\ge\frac{1}{C_{n,s}\,d^{n-2s}}.
\end{split}
\end{equation}

In addition,
\begin{eqnarray*}&&\!\!\!\!\!\!\!\!\!\!\!\!\!\!\!
{\mathfrak{S}}_s\,
\|\omega_{s,d}\|_{L^{2^*_s}(\R^n_+)}^2\\&\le&
\frac{c_{n,s}}{2}\iint_{Q(\R^{n}_+)}\frac{(\omega_{s,d}(x)-\omega_{s,d}(y))^2}{|x-y|^{n+2s}}\,dx\,dy\\&=&
\frac{c_{n,s}}{2}\iint_{\R^{2n}}\frac{(\omega_{s,d}(x)-\omega_{s,d}(y))^2}{|x-y|^{n+2s}}\,dx\,dy-
\frac{c_{n,s}}{2}\iint_{\R^n_-\times\R^n_-}\frac{(\omega_{s,d}(x)-\omega_{s,d}(y))^2}{|x-y|^{n+2s}}\,dx\,dy\\&=&{\mathfrak{S}}_s^\star
-\frac{c_{n,s}}{2}\iint_{(\R^{n-1}\times(-\infty,-d))^2}\frac{(\omega_{s}(x)-\omega_{s}(y))^2}{|x-y|^{n+2s}}\,dx\,dy.
\end{eqnarray*}
This, together with~\eqref{g.SK02w-norm} and~\eqref{g.SK02w2}, yields that
\begin{eqnarray*}
{\mathfrak{S}}_s^\star&\ge&{\mathfrak{S}}_s\,
\|\omega_{s,d}\|_{L^{2^*_s}(\R^n_+)}^2+\frac{c_{n,s}}{2}\iint_{(\R^{n-1}\times(-\infty,-d))^2}\frac{(\omega_{s}(x)-\omega_{s}(y))^2}{|x-y|^{n+2s}}\,dx\,dy\\&\ge&
{\mathfrak{S}}_s\left(1-\frac{C_{n,s}}{d^n}\right)+\frac{1}{C_{n,s}\,d^{n-2s}},
\end{eqnarray*} 
which is strictly larger than~${\mathfrak{S}}_s$ provided that~$d$ is chosen so large
that~${C_{n,s}\,{\mathfrak{S}}_s}<\frac{d^{2s}}{C_{n,s}}$.
\end{proof}

\section{Proof of Theorem~\ref{EXIST:GRS}}\label{EXIST:GRS:A}

This section is devoted to the proof of Theorem~\ref{EXIST:GRS}. 
We borrow some techniques developed in~\cite{MARIA} for the subcritical case.

\begin{proof}[Proof of Theorem~\ref{EXIST:GRS}] The proof is based on several steps, suitably adapted from the concentration-compactness method.
Rather than adapting deep,
but somewhat less elementary,
concentration-compactness principles of fractional flavor (such as those in~\cite{MR3216834, MR3617721, MR5074731}) to the case of the half-space, we provide here specific, self-contained arguments which can be entirely followed without any prior knowledge on the topic, and which can be easily extrapolated and used in other situations of interest.\medskip

\noindent{\bf Step 1. The minimizing sequence and auxiliary normalizations.}
We take a minimizing sequence~$u_k$. Since replacing $u_k$ by $|u_k|$ preserves the $L^{2^*_s}$-norm and does not increase the interaction energy, we may assume $u_k\ge0$. We also assume, without loss of generality, that it is normalized so that~$\|u_k\|_{L^{2^*_s}(\R^n_+)}=1$.
In particular,
\begin{equation}\label{SUMINSXD}
{\mathfrak{S}}_s=\lim_{k\to+\infty}
\frac{c_{n,s}}{2}\iint_{Q(\R^{n}_+)}\frac{(u_k(x)-u_k(y))^2}{|x-y|^{n+2s}}\,dx\,dy.
\end{equation}

There are a few additional normalizations that can be performed on any minimizing sequence.
First, given~$x_n \in(0,+\infty)$, one
can consider the Schwarz symmetric decreasing rearrangement of~$u_k(\cdot, x_n )$.
It is classical that this rearrangement perfectly preserves the norm in any Lebesgue space (thus, maintaining the normalization in~$L^{2^*_s}(\R^n_+)$). Moreover, see~\cite{MARIA}, this rearrangement decreases the term
$$ \iint_{Q(\R^{n}_+)}\frac{(u_k(x)-u_k(y))^2}{|x-y|^{n+2s}}\,dx\,dy$$
and therefore, without loss of generality, we can assume\footnote{As a side remark,
we observe that this normalization entails that
the minimizer constructed in Theorem~\ref{EXIST:GRS}, as well as the solution given in Corollary~\ref{0sqpjdlwbmlgtmlno2CO},
is rotationally symmetric and radially decreasing in the first~$(n-1)$ variables.}
that
\begin{equation}\label{RADSY} u_k(x',x_n)=u^\star_k(|x'|,x_n),\end{equation}
with~$\rho\longmapsto u^\star_k(\rho,x_n)$ nonincreasing in~$\rho$.

Moreover, since the nonlocal Neumann condition decreases the interaction energy (see~\cite[Theorem~2.1]{MR4651677},
without loss of generality we can suppose that, for all~$y\in\R^{n}_-$,
\begin{equation}\label{NSFQIFCELMI}
u_k(y)=\frac{\displaystyle\int_{\R^n_+}\frac{u_k(z)}{|y-z|^{n+2s}}\,dz}{\displaystyle\int_{\R^n_+}\frac{dz}{|y-z|^{n+2s}}}.
\end{equation}
As a consequence, using~\eqref{0pkjdmwr034iyhk92IKStuMD.3} with~$p:=2^*_s$, we have that
\begin{equation}\label{global-Lp-uk}
\sup_{k\in\N}\|u_k\|_{L^{2^*_s}(\R^n)}<+\infty.
\end{equation}
\medskip

\noindent{\bf Step 2. Mass split and nonvanishing at the origin.}
We pick~$\vartheta\in(0,1)$, to be chosen conveniently \label{JAOS:PAHEg}
small in what follows (in dependence of the energy gap provided by
Theorem~\ref{INterM0}).
We claim that the sequence~$u_k$ can be modified to avoid ``vanishing’’ as~$k\to+\infty$, namely that, without loss of generality, for all~$k\in\N$,
\begin{equation}\label{NOVAN}
\int_{B_1 \cap \R^n_+} |u_k(x)|^{2^*_s} \, dx =\vartheta.\end{equation}
Indeed, since
$$ \lim_{r\searrow0}\int_{B_r \cap \R^n_+} |u_k(x)|^{2^*_s} \, dx =0\quad{\mbox{and}}\quad\lim_{r\to+\infty}\int_{B_r \cap \R^n_+} |u_k(x)|^{2^*_s} \, dx =\|u_k\|_{L^{2^*_s}(\R^n_+)}^{2^*_s}=1,$$
for each~$k\in\N$ we can find~$r_k>0$ such that
\begin{equation}\label{OJASM:6v82d0c0k3SIKJ} \int_{B_{r_k} \cap \R^n_+} |u_k(x)|^{2^*_s} \, dx =\vartheta.\end{equation}
Hence, setting~$u_{k,r_k}(x):=r_k^{\frac{n}{2^*_s}} u_k(r_kx)$ and
using Lemma~\ref{SCAL}, with~$u:=u_k$, $\lambda=r_k$,
and~$\varrho=1$, we conclude that
$$
\int_{B_1\cap \R^n_+}|u_{k,r_k}(x)|^{2^*_s}\,dx=
\int_{B_{r_k}\cap \R^n_+}|u_k(x)|^{2^*_s}\,dx=\vartheta.$$
We also know from Lemma~\ref{SCAL} that~$u_{k,r_k}$
is a minimizing sequence as well, therefore, up to replacing~$u_k$ with~$u_{k,r_k}$, we can suppose that~\eqref{NSFQIFCELMI} and~\eqref{NOVAN} hold true.

As a consequence of~\eqref{NOVAN} we also have that
\begin{equation}\label{NOVANbis}
\int_{\R^n_+\setminus B_1} |u_k(x)|^{2^*_s} \, dx =1-\vartheta.\end{equation}
\medskip

\noindent{\bf Step 3. Continuity radii for the mass of the minimizing sequence.}
Due to the criticality of the problem, the mass of a minimizing sequence
could, in principle, concentrate along spheres. Some care is needed to avoid
this phenomenon. We claim that 
\begin{equation}\label{327sdf}
\liminf_{\rho\searrow1}\lim_{\e_0\searrow0}\lim_{k\to+\infty}\|u_{k}\|_{L^{2^*_s}(\R^n_+\cap(B_{\rho+\e_0}\setminus B_\rho))}=0.
\end{equation}
For this, let
$$ \R\ni r\longmapsto m_k(r):=
\begin{cases}
\|u_{k}\|^{2^*_s}_{L^{2^*_s}(\R^n_+\cap(B_{r}\setminus B_1))}&{\mbox{ if }}r>1\\0&{\mbox{ if }}r\le1.\end{cases}$$
We notice that~$m_k$ is nondecreasing and, by~\eqref{NOVAN},
$$m_k(r)\le\|u_{k}\|_{L^{2^*_s}(\R^n_+\setminus B_1)}^{2^*_s}=1-\vartheta.$$
Therefore, by Helly's Selection Theorem
(see e.g.~\cite[page~167]{MR385023}), there exists a nondecreasing function~$m_\infty$ such that,
for almost all $r\ge0$,
\begin{equation}\label{HELLDEF}\lim_{k\to+\infty} m_k(r) =m_\infty(r).\end{equation}

Since the set of discontinuity points of~$m_\infty$ is countable, we can take a sequence of continuity points~$r_j\searrow1$ as~$j\to+\infty$, namely
$$ m_\infty(r_j)=\lim_{\e_0\searrow0}m_\infty(r_j+\e_0).$$
That is,
\begin{eqnarray*}&&0=
\lim_{\e_0\searrow0}m_\infty(r_j+\e_0)-m_\infty(r_j)
=\lim_{\e_0\searrow0}\lim_{k\to+\infty}
m_k(r_j+\e_0)-m_k(r_j)\\&&\qquad\qquad
=\lim_{\e_0\searrow0}\lim_{k\to+\infty}
\|u_{k}\|^{2^*_s}_{L^{2^*_s}(\R^n_+\cap(B_{r_j+\e_0}\setminus B_1))}
-\|u_{k}\|^{2^*_s}_{L^{2^*_s}(\R^n_+\cap(B_{r_j}\setminus B_1))}\\&&\qquad\qquad
=\lim_{\e_0\searrow0}\lim_{k\to+\infty}
\|u_{k}\|^{2^*_s}_{L^{2^*_s}(\R^n_+\cap(B_{r_j+\e_0}\setminus B_{r_j}))}
.\end{eqnarray*}
Consequently,
$$ 0=\lim_{j\to+\infty}\lim_{\e_0\searrow0}\lim_{k\to+\infty}
\|u_{k}\|^{2^*_s}_{L^{2^*_s}(\R^n_+\cap(B_{r_j+\e_0}\setminus B_{r_j}))}
\ge\liminf_{\rho\searrow1}\lim_{\e_0\searrow0}\lim_{k\to+\infty}\|u_{k}\|_{L^{2^*_s}(\R^n_+\cap(B_{\rho+\e_0}\setminus B_\rho))}^{2^*_s}.$$
This proves~\eqref{327sdf}.\medskip

\noindent{\bf Step 4. The limit of the minimizing sequence.}
Being~$u_k$ a normalized minimizing sequence, we know that
\begin{equation}\label{3.7BIS}\sup_{k\in\N}
\iint_{Q(\R^{n}_+)} \frac{(u_k(x)-u_k(y))^2}{|x-y|^{n+2s}} \, dx\,dy<+\infty.\end{equation}
We denote by~$D^{s,2}(Q(\R^n_+))$ the energy space of measurable functions~$u$ such that~$u|_{\R^n_+}\in L^{2^*_s}(\R^n_+)$ and
$$\iint_{Q(\R^n_+)}\frac{(u(x)-u(y))^2}{|x-y|^{n+2s}}\,dx\,dy<+\infty,$$
endowed with the norm induced by the above quadratic form. Lemma~\ref{SOBLEE} shows that this is indeed a norm on the class under consideration.

%%%%%%More precisely, to make the weak compactness argument completely explicit, we understand~$D^{s,2}(Q(\R^n_+))$ as the Hilbert completion of the finite-energy class with respect to this norm. Lemma~\ref{SOBLEE} shows that the restriction map into~$L^{2^*_s}(\R^n_+)$ is continuous.

Up to a subsequence,~$u_k$ converges to some~$u_\infty$ weakly in~$D^{s,2}(Q(\R^n_+))$ and weakly in~$L^{2^*_s}(\R^n_+)$,
strongly in~$L^{q}(\R^n_+\cap B_\varrho)$ for every~$\varrho>0$ and~$q\in[1,2^*_s)$, and almost everywhere in~$\R^n_+$ (see e.g.~\cite[Theorem~7.1]{MR2944369} for the relevant local compact embedding).

Moreover, for every fixed~$y\in\R^n_-$, the function~$z\mapsto |y-z|^{-n-2s}$ belongs to
the dual of~$L^{2^*_s}(\R^n_+)$. Hence, the weak~$L^{2^*_s}$ convergence and~\eqref{NSFQIFCELMI} imply that~$u_k(y)\to u_\infty(y)$, where~$u_\infty$ is defined in~$\R^n_-$ by~\eqref{NSFQIFCELMI}.
Thus, we have that~$u_k\to u_\infty$ almost everywhere in~$\R^n$.

Consequently, by~\eqref{SUMINSXD} and Fatou's Lemma,
\begin{equation}\label{ILPA:1}
{\mathfrak{S}}_s\ge\frac{c_{n,s}}{2} \iint_{Q(\R^n_+) } \frac{(u_\infty(x)-u_\infty(y))^2}{|x-y|^{n+2s}} \, dx \, dy .
\end{equation}\medskip

\noindent{\bf Step 5. Energy split for the vanishing case.}
Our objective will be to show that~$u_\infty$ does not vanish identically.
This is a delicate point and requires a careful energy analysis. For this,
we point out that when~$u_\infty$ vanishes, then the energy of
a minimizing sequence can be efficiently separated into localized components with
``almost no interaction''.
The precise statement goes as follows. Let~$\phi\in C^\infty_c(B_2,[0,1])$,
and define~$u_{1,k,\phi}:=u_k\phi$ and~$u_{2,k,\phi}:=u_k(1-\phi)$. In this setting,
\begin{equation}\label{splTYH:1}\begin{split}&
{\mbox{if $u_\infty$ vanishes identically, then }}\\&
\frac{2{\mathfrak{S}}_s}{c_{n,s}}=
\lim_{k\to+\infty}\left[\sum_{j=1}^2
\iint_{Q(\R^n_+) } \frac{(u_{j,k,\phi}(x)-u_{j,k,\phi}(y))^2}{|x-y|^{n+2s}} \, dx \, dy+2J_{1,k,\phi}\right],
\end{split}\end{equation}
where
\begin{equation}\label{J1ojlwds}
J_{1,k,\phi}:=\iint_{{Q(\R^n_+)} }\frac{
\phi(x)(1-\phi(x))(u_k(x)-u_k(y))^2
}{|x-y|^{n+2s}}\,dx\,dy\ge0.\end{equation}

To prove this claim, first we check that
\begin{equation}\label{DOPPIp}
\lim_{k\to+\infty}\left|
\iint_{Q(\R^n_+) } \frac{(u_{1,k,\phi}(x)-u_{1,k,\phi}(y))(u_{2,k,\phi}(x)-u_{2,k,\phi}(y))}{|x-y|^{n+2s}} \, dx \, dy
-J_{1,k,\phi}\right|=0.
\end{equation}
To accomplish this goal, 
one can expand the increments as follows: 
\begin{align*}
u_{1,k,\phi}(x)-u_{1,k,\phi}(y)
&=\phi(x)(u_k(x)-u_k(y))
      +(\phi(x)-\phi(y))u_k(y),\\
u_{2,k,\phi}(x)-u_{2,k,\phi}(y)
&=(1-\phi(x))(u_k(x)-u_k(y))
      -(\phi(x)-\phi(y))u_k(y).
\end{align*} 
In this way,
\begin{equation}\label{2dLAOJMul}
\left|\iint_{Q(\R^n_+) }\frac{(u_{1,k,\phi}(x)-u_{1,k,\phi}(y))(u_{2,k,\phi}(x)-u_{2,k,\phi}(y))}{|x-y|^{n+2s}} \, dx \, dy-J_{1,k,\phi}\right|\le\sum_{m=2}^4 J_{m,k,\phi}
,\end{equation}
where
\begin{eqnarray*}
&& J_{2,k,\phi}:=\iint_{Q(\R^n_+) }\frac{
\phi(x)\,|\phi(x)-\phi(y)|\, |u_k(y)|\,|u_k(x)-u_k(y)|}{|x-y|^{n+2s}}\,dx\,dy,\\
&& J_{3,k,\phi}:=\iint_{{Q(\R^n_+)} }\frac{
(1-\phi(x))\,|\phi(x)-\phi(y)|\,|u_k(y)|\,
|u_k(x)-u_k(y)|
}{|x-y|^{n+2s}}\,dx\,dy,\\
{\mbox{and }}&& J_{4,k,\phi}:=\iint_{Q(\R^n_+) }\frac{
(\phi(x)-\phi(y))^2
|u_k(y)|^2}{|x-y|^{n+2s}}\,dx\,dy.
\end{eqnarray*}

Now, using the H\"older Inequality and recalling~\eqref{3.7BIS}, we see that
\begin{equation}\label{OIJK:0qudj-97bv6nd5h939356tfg10dj}\begin{split}
J_{2,k,\phi}+J_{3,k,\phi}&=\iint_{{Q(\R^n_+)} }\frac{|\phi(x)-\phi(y)|\,|u_k(y)|\,
|u_k(x)-u_k(y)|
}{|x-y|^{n+2s}}\,dx\,dy\\&
\le\sqrt{ J_{4,k,\phi}\;
\iint_{{Q(\R^n_+)} }\frac{(u_k(x)-u_k(y))^2}{|x-y|^{n+2s}}\,dx\,dy
}\\& \le C\,\sqrt{ J_{4,k,\phi}}.
\end{split}\end{equation}

Moreover, given~$R>2$, we have that
\begin{equation}\label{fDSUOJD2UDsUJ}\begin{split}&
\lim_{k\to+\infty}\iint_{\R^n\times B_R }\frac{
(\phi(x)-\phi(y))^2
|u_k(y)|^2}{|x-y|^{n+2s}}\,dx\,dy\\&\qquad\le 
C\lim_{k\to+\infty}\iint_{\R^n\times B_R }\frac{
\min\{1,|x-y|^2\}
|u_k(y)|^2}{|x-y|^{n+2s}}\,dx\,dy\\&\qquad\le
C\lim_{k\to+\infty}\int_{ B_R }
|u_k(y)|^2\,dy\\&\qquad=C\int_{ B_R }
|u_\infty(y)|^2\,dy\\&\qquad
=0,\end{split}
\end{equation}
due to the local convergence for subcritical exponents (recall also the uniform bound~\eqref{global-Lp-uk}) and the vanishing assumption on~$u_\infty$.

Furthermore, using the H\"older Inequality with exponents~$\frac{2^*_s}{2}=\frac{n}{n-2s}$ and~$\frac{n}{2s}$,
\begin{equation*}
\begin{split}&
\iint_{\R^n\times(\R^n\setminus B_R) }\frac{
(\phi(x)-\phi(y))^2
|u_k(y)|^2}{|x-y|^{n+2s}}\,dx\,dy
=\iint_{B_2\times(\R^n\setminus B_R) }\frac{
\phi^2(x)
|u_k(y)|^2}{|x-y|^{n+2s}}\,dx\,dy\\&\quad\le C
\int_{\R^n\setminus B_R }\frac{
|u_k(y)|^2}{|y|^{n+2s}}\,dy\le\|u_k\|^2_{L^{2^*_s}(\R^n\setminus B_R)}
\left(\int_{\R^n\setminus B_R}\frac{dy}{|y|^{\frac{n(n+2s)}{2s}}}\right)^{\frac{2s}n}\le \frac{C}{R^n},
\end{split}\end{equation*}
where the last inequality follows from the uniform global bound~\eqref{global-Lp-uk}.

This and~\eqref{fDSUOJD2UDsUJ} give that
$$\lim_{k\to+\infty}J_{4,k,\phi}\le\frac{C}{R^n}.$$
Since we can now send~$R\to+\infty$, we thereby conclude that
$$\lim_{k\to+\infty}J_{4,k,\phi}=0$$
and accordingly, in light of~\eqref{OIJK:0qudj-97bv6nd5h939356tfg10dj},
\begin{equation*}
\lim_{k\to+\infty}
J_{2,k,\phi}+J_{3,k,\phi}+J_{4,k,\phi}=0.\end{equation*}
The desired claim in~\eqref{DOPPIp} now follows from~\eqref{2dLAOJMul}.

Now we point out that~$u_k=u_{1,k,\phi}+u_{2,k,\phi}$, thus
\begin{eqnarray*}&&
\iint_{Q(\R^n_+) } \frac{(u_k(x)-u_k(y))^2}{|x-y|^{n+2s}} \, dx \, dy\\&&\quad=\sum_{j=1}^2
\iint_{Q(\R^n_+) } \frac{(u_{j,k,\phi}(x)-u_{j,k,\phi}(y))^2}{|x-y|^{n+2s}} \, dx \, dy\\&&\qquad\qquad
+2
\iint_{Q(\R^n_+) } \frac{(u_{1,k,\phi}(x)-u_{1,k,\phi}(y))(u_{2,k,\phi}(x)-u_{2,k,\phi}(y))}{|x-y|^{n+2s}} \, dx \, dy
.\end{eqnarray*}
From this, \eqref{SUMINSXD},
and~\eqref{DOPPIp}, we obtain the desired result in~\eqref{splTYH:1}.
\medskip

\noindent{\bf Step 6. Avoiding vanishing in the limit.}
We claim that
\begin{equation}\label{NINBZA}
{\mbox{$u_\infty$ does not vanish identically in~$\R^n_+$.}}
\end{equation}
Indeed, suppose, by contradiction that
\begin{equation}\label{NINBZA:SCOhy}\begin{split}&
{\mbox{$u_\infty$ vanishes identically in~$\R^n_+$,}}
\\&{\mbox{and thus in~$\R^n$, by the normalization in~\eqref{NSFQIFCELMI}.}}\end{split}
\end{equation}

Let~$\rho\in(1,\frac32)$ be a continuity point for the function~$m_\infty$ introduced in~\eqref{HELLDEF}. Choose~$\e\in(0,2-\rho)$ such that~$\rho+\e$ is also a continuity point for~$m_\infty$.
Let~$\eta_{\e,\rho}\in C^\infty_c(B_{\rho+\e},[0,1])$ with~$\eta_{\e,\rho}=1$ in~$B_\rho$.
We define~$u_{1,k,\e,\rho}:=u_k \eta_{\e,\rho}$
and~$u_{2,k,\e,\rho}:=u_k(1- \eta_{\e,\rho})$.

Accordingly, by~\eqref{NINBZA:SCOhy}, we can use the energy splitting in~\eqref{splTYH:1}.
This, together with~\eqref{J1ojlwds}, entails that
\begin{equation*}\begin{split}\frac{2{\mathfrak{S}}_s}{c_{n,s}}&\ge
\liminf_{k\to+\infty}\sum_{j=1}^2
\iint_{Q(\R^n_+) } \frac{(u_{j,k,\e,\rho}(x)-u_{j,k,\e,\rho}(y))^2}{|x-y|^{n+2s}} \, dx \, dy
\\&\ge\frac{2{\mathfrak{S}}_s}{c_{n,s}}\liminf_{k\to+\infty}\sum_{j=1}^2
\|u_{j,k,\e,\rho}\|^2_{L^{2^*_s}(\R^n_+)}
\\&\ge\frac{2{\mathfrak{S}}_s}{c_{n,s}}\lim_{k\to+\infty}\Big(
\|u_{k}\|^2_{L^{2^*_s}(B_\rho\cap\R^n_+)}+
\|u_{k}\|^2_{L^{2^*_s}(\R^n_+\setminus B_{\rho+\e})}\Big)\\&=\frac{2{\mathfrak{S}}_s}{c_{n,s}}\lim_{k\to+\infty}\left(
\|u_{k}\|^2_{L^{2^*_s}(B_\rho\cap\R^n_+)}+
\left( \|u_{k}\|^{2^*_s}_{L^{2^*_s}(\R^n_+\setminus B_{\rho})}
-\|u_{k}\|^{2^*_s}_{L^{2^*_s}(\R^n_+\cap(B_{\rho+\e}\setminus B_{\rho}))}
\right)^{\frac2{2^*_s}}\right)
.\end{split}\end{equation*}

We now let~$\e\searrow0$ through values such that~$\rho+\e$ is a continuity point of~$m_\infty$. By the continuity of~$m_\infty$ at~$\rho$, the mass of the annulus~$B_{\rho+\e}\setminus B_\rho$ tends to zero. We then let~$\rho\searrow1$ through continuity points of~$m_\infty$ (up to a subsequence), and we conclude that
\begin{equation*}\begin{split}\frac{2{\mathfrak{S}}_s}{c_{n,s}}&\ge
\frac{2{\mathfrak{S}}_s}{c_{n,s}}\left(
a^{\frac2{2^*_s}}+
b^{\frac2{2^*_s}}\right)
,\end{split}\end{equation*}
with
$$ a:=\lim_{\rho\searrow1}\lim_{k\to+\infty}\|u_{k}\|^{2^*_s}_{L^{2^*_s}(B_\rho\cap\R^n_+)}\qquad{\mbox{and}}\qquad
b:=\lim_{\rho\searrow1}\lim_{k\to+\infty}\|u_{k}\|^{2^*_s}_{L^{2^*_s}(\R^n_+\setminus B_{\rho})}.$$

Hence, in view of Lemma~\ref{SOBLEE},
\begin{equation}\label{o2jdwlmef:0wedf}
1\ge a^{\frac2{2^*_s}}+
b^{\frac2{2^*_s}}.\end{equation}
We also remark that
\begin{equation}\label{AOJSb} a+b=\lim_{\rho\searrow1}\lim_{k\to+\infty}\|u_{k}\|^{2^*_s}_{L^{2^*_s}(B_\rho\cap\R^n_+)}
+\|u_{k}\|^{2^*_s}_{L^{2^*_s}(\R^n_+\setminus B_{\rho})}
=\lim_{\rho\searrow1}
\lim_{k\to+\infty}\|u_{k}\|^{2^*_s}_{L^{2^*_s}(\R^n_+)}=1,\end{equation}
thanks to the normalization of our minimizing sequence,
therefore~\eqref{o2jdwlmef:0wedf} can be written as
\begin{equation*}
\alpha^{\frac{2^*_s}2} +\beta^{\frac{2^*_s}2}=
a+b=1^{\frac{2^*_s}2}\ge\left( a^{\frac2{2^*_s}}+
b^{\frac2{2^*_s}}\right)^{\frac{2^*_s}2}=( \alpha+\beta)^{\frac{2^*_s}2},\end{equation*}
where~$\alpha:=a^{\frac2{2^*_s}}$ and~$\beta:=
b^{\frac2{2^*_s}}$.

Hence, by Lemma~\ref{LE:CONVEX}, either~$\alpha=0$ or~$\beta=0$, and thus
either~$a=0$ or~$b=0$.

However, by~\eqref{NOVAN},
$$ a\ge\lim_{k\to+\infty}\|u_{k}\|^{2^*_s}_{L^{2^*_s}(B_1\cap\R^n_+)}=\vartheta>0,$$
therefore necessarily
\begin{equation}\label{OLJSs6ai:Be}
0=b=\lim_{\rho\searrow1}\lim_{k\to+\infty}\|u_{k}\|^{2^*_s}_{L^{2^*_s}(\R^n_+\setminus B_{\rho})}=1-\vartheta-
\lim_{\rho\searrow1}\lim_{k\to+\infty}\|u_{k}\|^{2^*_s}_{L^{2^*_s}(\R^n_+\cap(B_\rho\setminus B_{1}))}
,\end{equation}
where we have also used~\eqref{NOVANbis}.

Now, let~$\tau\in[0,1]$ and~$x_n\in[0,1-\sqrt\tau]$. We see that
\begin{equation} \label{HSJKD:03-1}\sqrt\tau\le 1-x_n\le1-x_n^2.\end{equation}
This implies that
$$(1+\tau)^2-x_n^2=1+2\tau+\tau^2-x_n^2\le 
1-x_n^2+2(1-x_n^2)^2+(1-x_n^2)^4\le 4(1-x_n^2)$$
and therefore
\begin{equation} \label{HSJKD:03-2}
\sqrt{(1+\tau)^2-x_n^2}\le 2\sqrt{1-x_n^2}.
\end{equation}

Moreover,
\begin{equation}\label{cnbvueworyt84ygtkjha65748}\begin{split}&
\sqrt{(1+\tau)^2-x_n^2}-\sqrt{1-x_n^2}=
\frac{\big((1+\tau)^2-x_n^2\big)-\big(1-x_n^2\big)}{\sqrt{(1+\tau)^2-x_n^2}+\sqrt{1-x_n^2}}
\\&\qquad=\frac{2\tau+\tau^2}{\sqrt{(1+\tau)^2-x_n^2}+\sqrt{1-x_n^2}}\le
\frac{3\tau}{ \sqrt{1-x_n^2}}.
\end{split}
\end{equation}

Thus, we deduce from~\eqref{RADSY},  \eqref{HSJKD:03-1},\eqref{HSJKD:03-2} and~\eqref{cnbvueworyt84ygtkjha65748}
that, if~$n\ge2$, $\tau\in[0,1]$, and~$x_n\in[0,1-\sqrt\tau]$,
\begin{equation}\label{POKSOJP15mcwe-1tuy}\begin{split}&
\int_{\{|x'|\in[\sqrt{1-x_n^2},\sqrt{(1+\tau)^2-x_n^2}]\}} |u_k(x',x_n)|^{2^*_s}\,dx'
\le C \int_{\sqrt{1-x_n^2}}^{\sqrt{(1+\tau)^2-x_n^2}} r^{n-2}\,|u^\star_k(r,x_n)|^{2^*_s}\,dr\\&\qquad\qquad
\le C (\sqrt{(1+\tau)^2-x_n^2})^{n-2}\int_{\sqrt{1-x_n^2}}^{\sqrt{(1+\tau)^2-x_n^2}} |u^\star_k(r,x_n)|^{2^*_s}\,dr\\&\qquad\qquad
\le C (\sqrt{1-x_n^2})^{n-2}\int_{\sqrt{1-x_n^2}}^{\sqrt{(1+\tau)^2-x_n^2}} |u^\star_k(r,x_n)|^{2^*_s}\,dr
\\&\qquad\qquad
\le C (\sqrt{1-x_n^2})^{n-2}\int_{\sqrt{1-x_n^2}}^{\sqrt{(1+\tau)^2-x_n^2} } |u^\star_k(\sqrt{1-x_n^2},x_n)|^{2^*_s}\,dr\\&\qquad\qquad\le C \tau(\sqrt{1-x_n^2})^{n-3} |u^\star_k(\sqrt{1-x_n^2},x_n)|^{2^*_s}\\&\qquad\qquad= C \tau (\sqrt{1-x_n^2})^{n-4} \int_{\frac{\sqrt{1-x_n^2}}2}^{\sqrt{1-x_n^2}}
|u^\star_k(\sqrt{1-x_n^2},x_n)|^{2^*_s}\,dr\\&\qquad\qquad\le
C \tau(\sqrt{1-x_n^2})^{n-4}\int_{\frac{\sqrt{1-x_n^2}}2}^{\sqrt{1-x_n^2}}|
u^\star_k(r,x_n)|^{2^*_s}\,dr\\&\qquad\qquad\le\frac{ C \tau}{1-x_n^2} \int_{\frac{\sqrt{1-x_n^2}}2}^{\sqrt{1-x_n^2}}
r^{n-2} |u^\star_k(r,x_n)|^{2^*_s}\,dr\\&\qquad\qquad\le
\frac{C \tau}{1-x_n^2} \int_{\{|x'|<\sqrt{1-x_n^2}\}}| u_k(x',x_n)|^{2^*_s}\,dx'\\
\\&\qquad\qquad\le
C \sqrt\tau \int_{\{|x'|<\sqrt{1-x_n^2}\}}| u_k(x',x_n)|^{2^*_s}\,dx'
.\end{split}
\end{equation}

We also observe that if~$x=(x',x_n)\in B_{1+\tau}\setminus B_1$, then
$$ |x'|\in\left[ \sqrt{1-x_n^2}, \sqrt{(1+\tau)^2-x_n^2} \right] .$$
As a result, we deduce from~\eqref{POKSOJP15mcwe-1tuy} that,
for all~$\tau\in[0,1]$, if~$n\ge2$,
\begin{equation}\label{KS:D03-4kKSMMD-305yg4Nsz}
\begin{split}&\int_{(B_{1+\tau}\setminus B_1)\cap\{x_n\in[0,1-\sqrt\tau]\}}|u_k(x)|^{2^*_s}\,dx\\&\quad\le
\int_{\{|x'|\in[\sqrt{1-x_n^2},\sqrt{(1+\tau)^2-x_n^2}]\}\times\{{x_n\in [0,1-\sqrt\tau]}\}} |u_k(x',x_n)|^{2^*_s}\,dx'\,dx_n\\&\quad
\le C\sqrt\tau \int_{\{|x'|<\sqrt{1-x_n^2}\}\times\{x_n\in[0,1-\sqrt\tau]\}}  |u_k(x',x_n)|^{2^*_s}\,dx'\,dx_n\\&\quad\le C\sqrt\tau,
\end{split}
\end{equation}
and a similar result holds when~$n=1$ because~$(B_{1+\tau}\setminus B_1)\cap\{x_n\in[0,1-\sqrt\tau]\}=\varnothing$.

Furthermore, if~$x\in(B_{1+\tau}\setminus B_{1})\cap\{x_n>1-\sqrt\tau\}$, then
\begin{eqnarray*}&&
|x-e_n|^2=|x'|^2+(1-x_n)^2=|x|^2-x_n^2+(1-x_n)^2\\&&\quad
\le(1+\tau)^2-(1-\sqrt\tau)^2+\tau=
2\sqrt\tau+2\tau+\tau^2<16\sqrt\tau,\end{eqnarray*}
as long as~$\tau$ is small enough, leading to
$$(B_{1+\tau}\setminus B_{1})\cap\{x_n>1-\sqrt\tau\}\subseteq B_{4\sqrt[4]{\tau}}(e_n).$$
Consequently, by~\eqref{KS:D03-4kKSMMD-305yg4Nsz},
\begin{eqnarray*}&&\lim_{\rho\searrow1}
\lim_{k\to+\infty}\|u_{k}\|^{2^*_s}_{L^{2^*_s}(\R^n_+\cap(B_\rho\setminus B_{1}))}\\&&\quad=
\lim_{\tau\searrow0}
\lim_{k\to+\infty}\left[\|u_{k}\|^{2^*_s}_{L^{2^*_s}((B_{1+\tau}\setminus B_{1})\cap\{x_n[0,1-\sqrt\tau]\})}+\|u_{k}\|^{2^*_s}_{L^{2^*_s}((B_{1+\tau}\setminus B_{1})\cap\{x_n>1-\sqrt\tau\})}\right]
\\&&\quad=\lim_{\tau\searrow0}
\lim_{k\to+\infty}\|u_{k}\|^{2^*_s}_{L^{2^*_s}((B_{1+\tau}\setminus B_{1})\cap\{x_n>1-\sqrt\tau\})}\\&&\quad\le\lim_{\tau\searrow0}
\lim_{k\to+\infty}\|u_{k}\|^{2^*_s}_{L^{2^*_s}(B_{4\sqrt[4]{\tau}}(e_n))}.
\end{eqnarray*}

{F}rom this and~\eqref{OLJSs6ai:Be} we arrive at
\begin{equation*}
\lim_{\tau\searrow0}
\lim_{k\to+\infty}\|u_{k}\|^{2^*_s}_{L^{2^*_s}(B_{4\sqrt[4]{\tau}}(e_n))}\ge1-\vartheta.
\end{equation*}
We thus\footnote{As anticipated on page~\pageref{JAOS:PAHEg},
the mass parameter~$\vartheta$
will be chosen conveniently small: here we are assuming, to start with,
that~$\vartheta<\frac12$.}
select~$\tau_0\in\left(0,\frac1{1000}\right)$ sufficiently small such that,
for all~$\tau\in(0,\tau_0]$ and~$k$ sufficiently large,
\begin{equation}\label{cjsaf74865t7iugfwsaigf8765jhf}
\|u_{k}\|^{2^*_s}_{L^{2^*_s}(B_{4\sqrt[4]{\tau}}(e_n))}\ge1-2\vartheta.
\end{equation}

We now consider a cutoff function~$\varphi_\tau\in C^\infty_c(B_{5\sqrt[4]{\tau}}(e_n),[0,1])$
with~$\varphi_\tau:=1$ in~$B_{4\sqrt[4]{\tau}}(e_n)$. We define~$v_{k,1}:=
u_k\varphi_\tau$ and~$v_{k,2}:=u_k(1-\varphi_\tau)$.

We stress that~$v_{k,1}(x)=0$ for all~$x\in\R^n_-$. 
As a consequence, by the energy split in~\eqref{splTYH:1}
and~\eqref{J1ojlwds},
\begin{align*}
{\mathfrak{S}}_s
&\ge\lim_{k\to+\infty}\sum_{j=1}^2
\frac{c_{n,s}}{2}\iint_{Q(\R^{n}_+)}\frac{(v_{k,j}(x)-v_{k,j}(y))^2}{|x-y|^{n+2s}}\,dx\,dy\\
&\ge\lim_{k\to+\infty}
\frac{c_{n,s}}{2}\iint_{Q(\R^{n}_+)}\frac{(v_{k,1}(x)-v_{k,1}(y))^2}{|x-y|^{n+2s}}\,dx\,dy\\
&=\lim_{k\to+\infty}\frac{c_{n,s}}{2}\iint_{\R^{2n}}\frac{(v_{k,1}(x)-v_{k,1}(y))^2}{|x-y|^{n+2s}}\,dx\,dy\\
&\ge {\mathfrak{S}}_s^\star\lim_{k\to+\infty}\|v_{k,1}\|_{L^{2^*_s}(\R^n)}^2\\
&\ge {\mathfrak{S}}_s^\star\lim_{k\to+\infty}\|u_{k}\|_{L^{2^*_s}(B_{4\sqrt[4]{\tau}}(e_n))}^2\\
&\ge {\mathfrak{S}}_s^\star(1-2\vartheta)^{\frac2{2^*_s}},
\end{align*}
where in the last inequality we have used~\eqref{cjsaf74865t7iugfwsaigf8765jhf}.

A contradiction is now reached by choosing~$\vartheta>0$ sufficiently small such that
$$ {\mathfrak{S}}_s^\star(1-2\vartheta)^{\frac2{2^*_s}}\ge
{\mathfrak{S}}_s+\vartheta,$$
and we stress that this is possible thanks to Theorem~\ref{INterM0}.
This proves~\eqref{NINBZA}.

Now we claim that
\begin{equation}\label{ILPA:2}
\|u_\infty\|_{L^{2^*_s}(\R^n_+)}=1.
\end{equation}
Indeed, let $$ \sigma:=\|u_\infty\|_{L^{2^*_s}(\R^n_+)}\quad{\mbox{ and }}\quad\tau:=\lim_{k\to+\infty}\|u_\infty-u_k\|_{L^{2^*_s}(\R^n_+)}.$$ 
By the weak convergence of~$u_k$ in~$D^{s,2}(Q(\R^{n}_+))$, we know that
\begin{eqnarray*}0&=&
c_{n,s}\lim_{k\to+\infty}
\iint_{Q(\R^n_+) } \frac{\big(u_\infty(x)-u_\infty(y)\big)
\big((u_\infty-u_k)(x)-(u_\infty-u_k)(y)\big)
}{|x-y|^{n+2s}} \, dx \, dy
\\&=&
\lim_{k\to+\infty}
\frac{c_{n,s}}{2} \iint_{Q(\R^n_+) } \frac{(u_\infty(x)-u_\infty(y))^2}{|x-y|^{n+2s}} \, dx \, dy\\&&\quad+
\frac{c_{n,s}}{2} \iint_{Q(\R^n_+) } \frac{\big((u_\infty-u_k)(x)-(u_\infty-u_k)(y)\big)^2}{|x-y|^{n+2s}} \, dx \, dy\\&&\quad-
\frac{c_{n,s}}{2} \iint_{Q(\R^n_+) } \frac{(u_k(x)-u_k(y))^2}{|x-y|^{n+2s}} \, dx \, dy\\&\ge& \lim_{k\to+\infty} {\mathfrak{S}}_s\sigma^2+
{\mathfrak{S}}_s\|u_\infty-u_k\|_{L^{2^*_s}(\R^n_+)}^2
-
\frac{c_{n,s}}{2} \iint_{Q(\R^n_+) } \frac{(u_k(x)-u_k(y))^2}{|x-y|^{n+2s}} \, dx \, dy\\&=&
{\mathfrak{S}}_s\sigma^2+{\mathfrak{S}}_s\tau^2-{\mathfrak{S}}_s.
\end{eqnarray*}
Therefore,  recalling Lemma~\ref{SOBLEE}, by \eqref{SUMINSXD}
$$ 1\ge\sigma^2+\tau^2.$$

In addition, by the Brezis-Lieb Lemma, 
\begin{equation}\label{khn:swd2BA}\begin{split}&\sigma^{2^*_s}=
\int _{\R^n_+}|u_\infty(x)|^{2^*_s}\,dx =\lim _{k\to+ \infty } \int _{\R^n_+}|u_k(x)|^{2^*_s}\,dx -\int _{\R^n_+}|u_\infty(x)-u_k(x)|^{2^*_s}\,dx
\\&\qquad\qquad\qquad=1-\lim _{k\to+ \infty }\int _{\R^n_+}|u_\infty(x)-u_k(x)|^{2^*_s}\,dx=1-\tau^{2^*_s}.
\end{split}\end{equation}

All in all, defining~$q:=\frac{2^*_s}2>1$,
we have that
\begin{equation}\label{FVA6e0ispq94} \big(\sigma^2+\tau^2\big)^{q}\le1=\sigma^{2^*_s}+\tau^{2^*_s}=
\sigma^{2q}+\tau^{2q}.\end{equation}
Now we exploit~\eqref{NINBZA} in tandem with Lemma~\ref{LE:CONVEX} (used here with~$\alpha:=\sigma^2>0$ and~$\beta:=\tau^2$).
In this way, we deduce from~\eqref{FVA6e0ispq94} that~$
\tau=0$.
Accordingly, recalling~\eqref{khn:swd2BA}, we find that~$\sigma=1$.
This completes the proof of~\eqref{ILPA:2}.

In view of~\eqref{ILPA:1} and~\eqref{ILPA:2}, we see that~$u_\infty$ is the desired minimum.
\end{proof}

\section{Proof of Corollary~\ref{0sqpjdlwbmlgtmlno2CO}}\label{0sqpjdlwbmlgtmlno2COse}

The proof of Corollary~\ref{0sqpjdlwbmlgtmlno2CO} is easy, but we
provide full details for the facility of the reader.

\begin{proof}[Proof of Corollary~\ref{0sqpjdlwbmlgtmlno2CO}]
Let~$u$ be the minimizer in Theorem~\ref{EXIST:GRS}
such that~$\|u\|_{L^{2^*_s}(\R^n_+)}=1$. Since the absolute value decreases the Gagliardo seminorm,
without loss of generality we can suppose that~$u\ge0$.
Let~$\e\in\R$ and~$\phi\in C^\infty_c(\R^n)$.
Then, setting~$u_\e:=u+\e\phi\in L^{2^*_s}(\R^n_+)$,
\begin{equation*} \begin{split}&
{\mathfrak{S}}_s\,\left(
1+2\e
\int_{\R^n_+}u^{2^*_s-1}(x)\phi(x)\,dx+o(\e)\right)=
{\mathfrak{S}}_s\,\|u_\e\|_{L^{2^*_s}(\R^n_+)}^2 \\&\quad\le
\frac{c_{n,s}}{2}\iint_{Q(\R^{n}_+)}\frac{(u_\e(x)-u_\e(y))^2}{|x-y|^{n+2s}}\,dx\,dy\\&\quad
={\mathfrak{S}}_s+c_{n,s}\e\iint_{Q(\R^{n}_+)}\frac{(u(x)-u(y))(\phi(x)-\phi(y))}{|x-y|^{n+2s}}\,dx\,dy+o(\e).
\end{split}\end{equation*}
Consequently, we factorize $\e$ and pass to the limit, recalling Lemma~\ref{SOBLEE},
$$ \int_{\R^n_+}u^{2^*_s-1}(x)\phi(x)\,dx=\frac{c_{n,s}}{{2\mathfrak{S}}_s}
\iint_{Q(\R^{n}_+)}\frac{(u(x)-u(y))(\phi(x)-\phi(y))}{|x-y|^{n+2s}}\,dx\,dy,$$
yielding that, in the distributional sense,
\begin{equation*}
\begin{cases} (-\Delta)^s u=2{\mathfrak{S}}_s\,u^{2^*_s-1} &{\mbox{ in }}\R^n_+,\\
{\mathcal{N}}_s u=0&{\mbox{ in }}\R^n_-.\end{cases}
\end{equation*}
The function~$(2{\mathfrak{S}}_s)^{\frac{1}{2^*_s-2}}u$ is thus a nontrivial solution of~\eqref{pb1}.
\end{proof}

\begin{appendix}

\section{Auxiliary material of general interest}

\subsection{Simple observations about convexity}
This is a simple, but useful, remark of general use:

\begin{lemma}\label{LE:CONVEX0}
Let~$\alpha$, $\beta\ge0$ and~$q\ge1$. Then,
\begin{equation}\label{LE:CONVEX09} \big(\alpha+\beta\big)^{q}\ge
\alpha^{q}+\beta^{q}.\end{equation}
\end{lemma}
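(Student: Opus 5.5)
The plan is to reduce \eqref{LE:CONVEX09} to a one-variable inequality using homogeneity. If~$\alpha+\beta=0$, then~$\alpha=\beta=0$ and both sides vanish, so there is nothing to prove. Otherwise, set~$t:=\frac{\alpha}{\alpha+\beta}\in[0,1]$, so that~$1-t=\frac{\beta}{\alpha+\beta}$. Both sides of~\eqref{LE:CONVEX09} are positively homogeneous of degree~$q$ in~$(\alpha,\beta)$, so dividing by~$(\alpha+\beta)^q$ turns the claim into
$$ 1\ge t^q+(1-t)^q\qquad{\mbox{for all }}t\in[0,1].$$

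The key observation is that~$q\ge1$ and~$t\in[0,1]$ give~$t^q\le t$, since~$t^{q-1}\le1$; this also holds at~$t=0$, where~$t^q=0$. In the same way,~$(1-t)^q\le 1-t$. Adding the two bounds gives~$t^q+(1-t)^q\le t+(1-t)=1$. Multiplying back by~$(\alpha+\beta)^q$ then yields~\eqref{LE:CONVEX09}.

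None of these steps is a real obstacle; the only care needed is to handle the degenerate case~$\alpha+\beta=0$ separately so that the normalization is legitimate. The same computation also gives the strict version presumably stated next in Lemma~\ref{LE:CONVEX}, which is the one used in the main proof. If~$q>1$ and both~$\alpha$ and~$\beta$ are positive, then~$t\in(0,1)$, so~$t^{q-1}<1$ and~$t^q<t$, and the inequality is strict. Hence equality in~\eqref{LE:CONVEX09} with~$q>1$ forces~$\alpha=0$ or~$\beta=0$, which is exactly the dichotomy used in Step~6 and in the proof of~\eqref{ILPA:2}.
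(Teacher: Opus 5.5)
Your proof is correct, but it takes a different route from the paper's.

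The paper assumes without loss of generality that $\alpha\ge\beta$. It applies the convexity (tangent-line) bound $(\alpha+\beta)^q\ge\alpha^q+q\alpha^{q-1}\beta$, and then uses $\alpha^{q-1}\ge\beta^{q-1}$ and $q\ge1$ to get $q\alpha^{q-1}\beta\ge q\beta^q\ge\beta^q$.

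You instead use homogeneity to reduce to $t^q+(1-t)^q\le 1$ on $[0,1]$. From there you only need the monotonicity fact $t^q\le t$ for $t\in[0,1]$. This avoids both convexity and the ordering of the variables, at the cost of treating the degenerate case $\alpha+\beta=0$ separately, which is trivial.

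Your route has the advantage that the strict version, Lemma~\ref{LE:CONVEX}, follows in one line: $t\in(0,1)$ and $q>1$ give $t^q<t$. The paper proves that lemma separately, by contradiction, applying $(1+a)^q>1+qa$ with both $a=\beta/\alpha$ and $a=\alpha/\beta$. Its own chain would also have given strictness directly, since $q\beta^q>\beta^q$ when $q>1$ and $\beta>0$.

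Both arguments are elementary and of comparable length.
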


\begin{proof} Up to exchanging~$\alpha$ and~$\beta$, we can suppose that~$\alpha\ge\beta$. Then, using the convexity of the map~$\R_+\ni r\mapsto r^q$,
$$ \big(\alpha+\beta\big)^{q}\ge\alpha^q+q\alpha^{q-1}\beta\ge
\alpha^q+q\beta^q\ge\alpha^q+\beta^q,$$
as desired.
\end{proof}

The next result clarifies that when~$q>1$, equality is attained in~\eqref{LE:CONVEX09}
if and only if either~$\alpha=0$ or~$\beta=0$.

\begin{lemma}\label{LE:CONVEX}
Let~$\alpha$, $\beta\ge0$ and~$q>1$. Suppose that
\begin{equation}\label{OJLsn-1} \big(\alpha+\beta\big)^{q}\le
\alpha^{q}+\beta^{q}.\end{equation}
Then, either~$\alpha=0$ or~$\beta=0$.
\end{lemma}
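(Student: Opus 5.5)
The plan is to argue by contradiction, using the strict convexity of $r\mapsto r^q$ for $q>1$. Suppose that both $\alpha>0$ and $\beta>0$. By symmetry we may assume $\alpha\ge\beta>0$. The goal is to show that in this case the inequality of Lemma~\ref{LE:CONVEX0} is strict, namely $(\alpha+\beta)^q>\alpha^q+\beta^q$. That would contradict~\eqref{OJLsn-1}.

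The cleanest route is to refine the chain of inequalities in the proof of Lemma~\ref{LE:CONVEX0}.

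\begin{itemize}
\item First, write $(\alpha+\beta)^q-\alpha^q=q\int_\alpha^{\alpha+\beta} r^{q-1}\,dr$.
\item Since $q>1$, the map $r\mapsto r^{q-1}$ is strictly increasing on $(0,+\infty)$. Hence the integrand exceeds $\alpha^{q-1}$ on $(\alpha,\alpha+\beta]$, and this gives $(\alpha+\beta)^q-\alpha^q> q\alpha^{q-1}\beta$, because the interval has positive length $\beta>0$.
\item Next, $\alpha\ge\beta>0$ gives $q\alpha^{q-1}\beta\ge q\beta^{q}$.
\item Finally, $q\beta^q>\beta^q$ since $q>1$ and $\beta>0$ (the latter strictness alone would already suffice).
\end{itemize}

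Combining these yields $(\alpha+\beta)^q>\alpha^q+\beta^q$, which contradicts~\eqref{OJLsn-1}. Therefore at least one of $\alpha,\beta$ must vanish.

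An alternative route, which I keep in reserve, is normalization. Set $t:=\frac{\alpha}{\alpha+\beta}\in(0,1)$. Then~\eqref{OJLsn-1} becomes $1\le t^q+(1-t)^q$. However, $t^q<t$ and $(1-t)^q<1-t$ for $t\in(0,1)$ and $q>1$, so $t^q+(1-t)^q<1$, again a contradiction.

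There is no real obstacle here. The only point requiring care is to record that the strict inequality genuinely uses both $q>1$ and $\beta>0$. Note also that the case $\alpha+\beta=0$ is trivially covered by the conclusion.
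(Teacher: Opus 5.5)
Your proof is correct and is essentially the paper's argument: both argue by contradiction and use strict convexity in the form $(\alpha+\beta)^q>\alpha^q+q\alpha^{q-1}\beta$ for $\alpha,\beta>0$. The only difference is in handling the symmetry. You assume $\alpha\ge\beta$ and close with $q\alpha^{q-1}\beta\ge q\beta^q>\beta^q$, a strict version of the chain in Lemma~\ref{LE:CONVEX0}. The paper instead also applies the bound with $\alpha$ and $\beta$ swapped, obtains $q\alpha^{q-1}<\beta^{q-1}$ and $q\beta^{q-1}<\alpha^{q-1}$, and combines them to reach $q^2<1$.
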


\begin{proof} For the sake of contradiction, we suppose that
\begin{equation}\label{CONTRAJOLDMD}
{\mbox{both~$\alpha$ and~$\beta$ are strictly positive.}}\end{equation}
Using the strict convexity of the map~$\R_+\ni r\mapsto r^q$, we know that~$(1+a)^q>1+qa$, for all~$a>0$. We can use this both
with~$a:=\frac{\beta}{\alpha}$ and~$a:=\frac{\alpha}{\beta}$, which
is possible due to~\eqref{CONTRAJOLDMD}, and obtain that
$$ \big(\alpha+\beta\big)^{q}>\max\big\{
\alpha^{q}+q\beta\alpha^{q-1}, \beta^{q}+q\alpha\beta^{q-1}\big\}.$$
This, in tandem with~\eqref{OJLsn-1}, returns that
$$ \max\big\{
\alpha^{q}+q\beta\alpha^{q-1}, \beta^{q}+q\alpha\beta^{q-1}\big\}<\alpha^{q}+\beta^{q}.$$
Consequently,
$$ q\alpha^{q-1}<\beta^{q-1}\qquad{\mbox{and}}\qquad
q\beta^{q-1}<\alpha^{q-1},$$
leading to~$q^2\alpha^{q-1}<\alpha^{q-1}$, and thus~$q<1$, which
is a contradiction.
\end{proof}

\subsection{Scaling invariance}
We recall a useful rescaling argument:

\begin{lemma}\label{SCAL}
If~$\lambda>0$ and~$u_\lambda(x):=\lambda^{\frac{n}{2^*_s}} u(\lambda x)
=\lambda^{\frac{n-2s}{2}} u(\lambda x)$, we have that
$$ \iint_{Q(\R^{n}_+)}\frac{(u_\lambda(x)-u_\lambda(y))^2}{|x-y|^{n+2s}}\,dx\,dy=
\iint_{Q(\R^{n}_+)}\frac{(u(x)-u(y))^2}{|x-y|^{n+2s}}\,dx\,dy,$$
$$
\int_{B_\varrho\cap \R^n_+}|u_\lambda(x)|^{2^*_s}\,dx=
\int_{B_{\lambda\varrho}\cap \R^n_+}|u(x)|^{2^*_s}\,dx,
$$
and
$$\|u_\lambda\|_{L^{2^*_s}(\R^n_+)}=\|u\|_{L^{2^*_s}(\R^n_+)}.$$
\end{lemma}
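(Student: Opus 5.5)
The plan is to verify the three identities by the single change of variables $X:=\lambda x$, $Y:=\lambda y$. The key structural fact is that this dilation maps the halfspaces to themselves. Since $\lambda>0$, we have $\lambda x\in\R^n_\pm$ if and only if $x\in\R^n_\pm$. Consequently the map $(x,y)\mapsto(\lambda x,\lambda y)$ is a bijection of $Q(\R^n_+)$ onto itself. This invariance is the only point where the halfspace geometry enters, and I expect no genuine obstacle here.

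For the energy, I would write
$$(u_\lambda(x)-u_\lambda(y))^2=\lambda^{n-2s}\,(u(\lambda x)-u(\lambda y))^2.$$
Substituting $X=\lambda x$, $Y=\lambda y$ gives $dx\,dy=\lambda^{-2n}\,dX\,dY$ and $|x-y|^{-n-2s}=\lambda^{n+2s}|X-Y|^{-n-2s}$. The total power of $\lambda$ is then
$$(n-2s)-2n+(n+2s)=0,$$
so the energy is unchanged. Here the integration domain stays $Q(\R^n_+)$ by the invariance noted above.

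For the local mass, I would use $|u_\lambda(x)|^{2^*_s}=\lambda^{n}|u(\lambda x)|^{2^*_s}$, which holds because $\frac{n}{2^*_s}\cdot 2^*_s=n$. The substitution $X=\lambda x$ contributes $dx=\lambda^{-n}\,dX$, so the powers of $\lambda$ cancel. It also maps $B_\varrho\cap\R^n_+$ onto $B_{\lambda\varrho}\cap\R^n_+$, which gives the second identity.

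The norm identity follows from the same computation with $B_\varrho$ replaced by all of $\R^n$, or equivalently by letting $\varrho\to+\infty$ and applying monotone convergence. Finally, I would record the elementary identity $\frac{n}{2^*_s}=\frac{n-2s}{2}$, which justifies the two expressions given for $u_\lambda$ in the statement.
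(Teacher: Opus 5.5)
Your proposal is correct and follows exactly the approach the paper indicates: the paper omits the proof as ``a simple change of variable,'' and your substitution $X=\lambda x$, $Y=\lambda y$ does it. The exponent bookkeeping and your observation that dilations with $\lambda>0$ map $Q(\R^n_+)$ onto itself are precisely what that omitted argument needs.
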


The proof of  Lemma~\ref{SCAL} relies on a simple change of variable and is omitted.

\subsection{Cutoff arguments and kernel estimates}
Below is an interesting kernel estimate:

\begin{lemma}\label{KE:ESle}
Given~$R>0$, let\footnote{As customary, we use the notation
$$ \chi_\Omega(x):=\begin{cases} 1 &{\mbox{ if }} x\in\Omega,\\
0 &{\mbox{ if }} x\in\R^n\setminus\Omega.
\end{cases} $$}
\begin{eqnarray*}
&& K_{1,R}(y):=
\chi_{B_{2R}}(y)\int_{\R^n}
\min\left\{1,\frac{|x-y|^2}{R^2}\right\}\,\frac{dx}{|x-y|^{n+2s}}\\ {\mbox{and }}&&K_{2,R}(y):=
\chi_{\R^n\setminus B_{2R}}(y)\int_{B_{2R}}\min\left\{1,\frac{|x-y|^2}{R^2}\right\}\,\frac{dx}{|x-y|^{n+2s}}.
\end{eqnarray*}

Let also~$K_R:=K_{1,R}+K_{2,R}$. Then, there exists~$C>0$, depending only on~$n$ and~$s$, such that
\begin{equation}\label{La678:jkokmv502-y}
K_R(y)\le\frac{CR^n}{(R+|y|)^{n+2s}}.\end{equation}
\end{lemma}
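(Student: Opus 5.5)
We split $K_R=K_{1,R}+K_{2,R}$ and treat the two pieces separately. We first reduce to $R=1$ by scaling. With $x=RX$ and $y=RY$ one checks that $K_{j,R}(RY)=R^{-2s}K_{j,1}(Y)$. The right-hand side of~\eqref{La678:jkokmv502-y} transforms in the same way:
$$\frac{CR^n}{(R+R|Y|)^{n+2s}}=R^{-2s}\,\frac{C}{(1+|Y|)^{n+2s}}.$$
Hence it suffices to prove $K_1(y)\le C(1+|y|)^{-n-2s}$.

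\textbf{The piece $K_{1,1}$.} This piece is supported in $B_2$, where $(1+|y|)^{-n-2s}\ge 3^{-n-2s}$. So we only need a uniform bound. By translation invariance,
$$K_{1,1}(y)\le\int_{\R^n}\min\{1,|z|^2\}\,\frac{dz}{|z|^{n+2s}}.$$
Split the integral into $B_1$ and its complement. On $B_1$ the integrand is $|z|^{2-n-2s}$, which is integrable since $s<1$. Outside $B_1$ the integrand is $|z|^{-n-2s}$, which is integrable since $s>0$. This gives a finite constant depending only on $n$ and $s$.

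\textbf{The piece $K_{2,1}$.} Here $|y|\ge2$ and $x\in B_2$. The integrand is bounded by $|x-y|^{-n-2s}$, since $\min\{1,\cdot\}\le1$. We want a lower bound $|x-y|\ge c(1+|y|)$. If $|y|\ge4$, then $|x-y|\ge|y|-2\ge|y|/2\ge(1+|y|)/4$. If $2\le|y|<4$, the gap $|y|-|x|$ can be arbitrarily small, so no such uniform lower bound holds. In this range we use instead that $(1+|y|)^{-n-2s}\ge5^{-n-2s}$, so it again suffices to bound $K_{2,1}$ by a constant. That follows from the same estimate as for $K_{1,1}$, since the integral over $B_2$ is at most the integral over $\R^n$. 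For $|y|\ge4$ we get
$$K_{2,1}(y)\le|B_2|\,4^{n+2s}(1+|y|)^{-n-2s}.$$

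\textbf{Main obstacle.} The only delicate point is the near-diagonal singularity. For $y$ close to $\partial B_2$ or inside $B_2$, the kernel is integrable only because of the truncation $\min\{1,|x-y|^2\}$, which uses $s<1$. Away from $B_2$, the decay in $|y|$ follows simply from the size of $B_2$. Combining the two pieces and undoing the scaling gives~\eqref{La678:jkokmv502-y}.
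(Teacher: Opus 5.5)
Your proof is correct and follows essentially the paper's argument: a uniform $CR^{-2s}$ bound for $K_{1,R}$ and for $K_{2,R}$ when $2R\le|y|<4R$ (using the truncation together with $s\in(0,1)$), and the lower bound $|x-y|\gtrsim R+|y|$ combined with the volume of $B_{2R}$ when $|y|\ge4R$. The only cosmetic differences are these: you normalize to $R=1$ by scaling, and you bound the near-diagonal integral by the translation-invariant constant $\int_{\R^n}\min\{1,|z|^2\}\,|z|^{-n-2s}\,dz$, whereas the paper keeps $R$ throughout and splits $\R^n$ into $B_{4R}$ and its complement.
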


\begin{proof} Let~$y\in B_{4R}$. Then,
\begin{equation}\label{BAL:1} R\ge\frac{R+|y|}5\end{equation}
and
\begin{equation}\label{BAL:2}
\frac1{R^2}\int_{B_{4R}}
\frac{dx}{|x-y|^{n+2s-2}}\le\frac{C}{R^{2s}},
\end{equation}
for some~$C>0$ depending only on~$n$ and~$s$, which we will freely rename in what follows.

Also, if~$y\in B_{2R}$,
\begin{equation}\label{BAL:3}
\int_{\R^n\setminus B_{4R}}
\frac{dx}{|x-y|^{n+2s}}\le\frac{C}{R^{2s}}.
\end{equation}

As a result, combining~\eqref{BAL:1}, \eqref{BAL:2}, and~\eqref{BAL:3},
\begin{equation}\label{vqrsfwygejvoPKS:KP:D31:1}
\begin{split}&K_{1,R}(y)\le
\chi_{B_{2R}}(y)\left( \frac1{R^2}\int_{B_{4R}}
\frac{dx}{|x-y|^{n+2s-2}}+
\int_{\R^n\setminus B_{4R}}
\frac{dx}{|x-y|^{n+2s}}\right)\\&\qquad\qquad\qquad\qquad
\le
\frac{C\chi_{B_{2R}}(y)}{R^{2s}}\le\frac{CR^n}{(R+|y|)^{n+2s}}.
\end{split}\end{equation}

Moreover, if~$x\in B_{2R}$ and~$y\in\R^n\setminus B_{4R}$, we have that
$$|x-y|\ge |y|-|x|\ge|y|-2R\ge\frac{R+|y|}3.$$
On this account, using again~\eqref{BAL:2} we conclude that
\begin{eqnarray*}
K_{2,R}(y)&\le&
\frac{\chi_{B_{4R}\setminus B_{2R}}(y)}{R^2}\int_{B_{2R}}\frac{dx}{|x-y|^{n+2s-2}}+
\chi_{\R^n\setminus B_{4R}}(y)\int_{B_{2R}}\frac{dx}{|x-y|^{n+2s}}\\&\le&\frac{C\chi_{B_{4R}\setminus B_{2R}}(y)}{R^{2s}}+
\frac{CR^n\chi_{\R^n\setminus B_{4R}}(y)}{(R+|y|)^{n+2s}}
\end{eqnarray*}
and thus, using again~\eqref{BAL:1},
$$K_{2,R}(y)\le
\frac{CR^n}{(R+|y|)^{n+2s}}.$$
From this and~\eqref{vqrsfwygejvoPKS:KP:D31:1}, the desired result plainly follows.
\end{proof}

Now we consider a cutoff function.
Given~$R>0$, $f:\R^n\to\R$, and~$\tau\in C^\infty_c(B_2,[0,1])$ with~$\tau=1$ in~$B_1$, we define~$\tau_R(x):=\tau\left(\frac{x}R\right)$
and
\begin{equation}\label{LASJMLv53647r194S}
f_R(x):=f(x)\tau_R(x).\end{equation} 
The link with the kernel estimate in Lemma~\ref{KE:ESle} is provided by the following observation:

\begin{lemma}
In the notation of Lemma~\ref{KE:ESle}, we have that
\begin{equation}\label{La678:jkokmv502-1}
\iint_{\R^{2n}}\frac{f^2(y)(\tau_R(x)-\tau_R(y))^2}{|x-y|^{n+2s}}\,dx\,dy\le C\int_{\R^n}
f^2(y)\,K_R(y)\,dy.
\end{equation}
Also, 
\begin{equation}\label{La678:jkokmv502-1a}
\iint_{\R^{2n}}\frac{f^2(y)(\tau_R(x)-\tau_R(y))^2}{|x-y|^{n+2s}}\,dx\,dy\le CR^n\int_{\R^n}
\frac{f^2(y)}{(R+|y|)^{n+2s}}\,dy,\end{equation}
and, for all~$p\in[2,+\infty]$,
\begin{equation}\label{La678:jkokmv502-2}
\iint_{\R^{2n}}\frac{f^2(y)(\tau_R(x)-\tau_R(y))^2}{|x-y|^{n+2s}}\,dx\,dy\le CR^{\frac{n(p-2)}{p}-2s}\|f\|_{L^p(\R^n)}^2.
\end{equation}
\end{lemma}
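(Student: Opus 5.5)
The plan is to prove the three inequalities in order, each following from the previous one.

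\emph{First bound \eqref{La678:jkokmv502-1}.} Since $\tau\in C^\infty_c(B_2,[0,1])$, we have $|\tau_R(x)-\tau_R(y)|\le \min\{1,\|\nabla\tau\|_{L^\infty}|x-y|/R\}$. Hence
$$(\tau_R(x)-\tau_R(y))^2\le C\min\{1,|x-y|^2/R^2\},$$
with $C$ depending only on the fixed profile $\tau$ (and thus, in the spirit of the paper, on $n$ and $s$). Moreover, $\tau_R(x)-\tau_R(y)=0$ whenever both $x$ and $y$ lie outside $B_{2R}$. So for fixed $y$ we split:
\begin{itemize}
\item if $y\in B_{2R}$, we integrate in $x$ over all of $\R^n$ and obtain exactly the integral defining $K_{1,R}(y)$;
\item if $y\notin B_{2R}$, only $x\in B_{2R}$ contributes, and we obtain the integral defining $K_{2,R}(y)$.
\end{itemize}
Applying Tonelli's theorem (all integrands are nonnegative), we integrate first in $x$ and then against $f^2(y)\,dy$. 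This yields \eqref{La678:jkokmv502-1}.

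\emph{Second bound \eqref{La678:jkokmv502-1a}.} This is immediate: insert the pointwise kernel estimate \eqref{La678:jkokmv502-y} of Lemma~\ref{KE:ESle} into \eqref{La678:jkokmv502-1}.

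\emph{Third bound \eqref{La678:jkokmv502-2}.} Apply Hölder's inequality to the right-hand side of \eqref{La678:jkokmv502-1a}, pairing $f^2\in L^{p/2}$ with the weight in the dual exponent $r=\frac{p}{p-2}$ (for $p=2$ take $r=\infty$; for $p=\infty$ take $r=1$). This gives
$$\int_{\R^n}\frac{f^2(y)}{(R+|y|)^{n+2s}}\,dy\le \|f\|_{L^p(\R^n)}^2\,\big\|(R+|\cdot|)^{-n-2s}\big\|_{L^r(\R^n)}.$$
By scaling $y=Rz$,
$$\big\|(R+|\cdot|)^{-n-2s}\big\|_{L^r(\R^n)}=R^{-n-2s+\frac nr}\,\big\|(1+|\cdot|)^{-n-2s}\big\|_{L^r(\R^n)}.$$
The last norm is finite for every $r\in[1,\infty]$ because $(n+2s)r>n$. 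Multiplying by the prefactor $R^n$ in \eqref{La678:jkokmv502-1a}, the total power of $R$ is
$$n-n-2s+\frac nr=n\,\frac{p-2}{p}-2s,$$
which is exactly the exponent claimed. The constant depends on $p$ only through this finite integral.

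The only point requiring real care is the first step: we must check that the split of the $y$-domain matches precisely the indicator functions in $K_{1,R}$ and $K_{2,R}$, and in particular that nothing is lost when $y\notin B_{2R}$ and $x\notin B_{2R}$. This holds since both cutoffs vanish there. The remaining steps are routine.
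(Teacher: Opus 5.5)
Your proposal is correct and follows the paper's proof essentially step for step. The first bound comes from splitting according to whether $y\in B_{2R}$, using that the cutoff difference vanishes when both points lie outside $B_{2R}$ and the bound $|\tau_R(x)-\tau_R(y)|\le C\min\{1,|x-y|/R\}$; the second from the kernel estimate of Lemma~\ref{KE:ESle}; and the third from H\"older with exponents $\frac p2$ and $\frac{p}{p-2}$ together with the scaling $y=Rz$.
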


\begin{proof} We use the notation
\begin{equation*}
\iint_{A\times B}\textcircled{$g$}=\iint_{A\times B}\frac{f^2(y)(g(x)-g(y))^2}{|x-y|^{n+2s}}\,dx\,dy
.\end{equation*}
We observe that if~$x$, $y\in B_R$ then~$\tau_R(x)-\tau_R(y)=1-1=0$.
Similarly, if~$x$, $y\in\R^n\setminus B_{2R}$ then~$\tau_R(x)-\tau_R(y)=0-0=0$.
Therefore,
\begin{equation}\begin{split}\label{La678:jkokmv502-x}&
\frac12\iint_{\R^n\times \R^n}\textcircled{{$\tau_R$}}\le
\iint_{\R^n\times B_{2R}}\textcircled{{$\tau_R$}}+\iint_{\R^n\times(\R^n\setminus B_{2R})}\textcircled{{$\tau_R$}}
\\&\quad=\iint_{\R^n\times B_{2R}}\textcircled{{$\tau_R$}}+ \iint_{ B_{2R}\times (\R^n\setminus B_{2R})}\textcircled{{$\tau_R$}}\\&\quad=
\iint_{\R^n\times\R^n} \chi_{B_{2R}}(y)
\frac{f^2(y)(\tau_R(x)-\tau_R(y))^2}{|x-y|^{n+2s}}\,dx\,dy\\&\qquad\qquad+
\iint_{B_{2R}\times\R^n} \chi_{\R^n\setminus B_{2R}}(y)
\frac{f^2(y)(\tau_R(x)-\tau_R(y))^2}{|x-y|^{n+2s}}\,dx\,dy.
\end{split}\end{equation}
Also, since~$\|\nabla\tau_R\|_{L^\infty(\R^n)}\le\frac{C}{R}$, we have that
$$|\tau_R(x)-\tau_R(y)|\le C\min\left\{1,\frac{|x-y|}{R}\right\}.$$
This observation, \eqref{La678:jkokmv502-x}, and the definition of~$K_R$ in Lemma~\ref{KE:ESle} yield the desired result in~\eqref{La678:jkokmv502-1}.

From~\eqref{La678:jkokmv502-y} and~\eqref{La678:jkokmv502-1}, the claim
in~\eqref{La678:jkokmv502-1a} plainly follows.

Now, if~$p\in(2,+\infty)$ we use~\eqref{La678:jkokmv502-1a} in combination with the H\"older Inequality with conjugate exponents~$\frac{p}{2}$ and~$\frac{p}{p-2}$, finding that
\begin{eqnarray*}&&
\iint_{\R^{2n}}\frac{f^2(y)(\tau_R(x)-\tau_R(y))^2}{|x-y|^{n+2s}}\,dx\,dy\le CR^n\|f\|^2_{L^p(\R^n)}\left(\int_{\R^n}
\frac{dy}{(R+|y|)^{\frac{(n+2s)p}{p-2}}}\right)^{\frac{p-2}{p}}.
\end{eqnarray*}
The change of variable~$z:=\frac{y}R$ yields~\eqref{La678:jkokmv502-2} in this case. 

The cases~$p=2$ and~$p++\infty$ follow directly from~\eqref{La678:jkokmv502-1a}.
Thus~\eqref{La678:jkokmv502-2} holds for all~$p\in[2,+\infty]$, as advertised.
\end{proof}

\begin{corollary}\label{f0dojflergh4-5ipyhk3} Let~$p\in[2,2^*_s]$. Assume that~$f\in L^p(\R^n)$ and
\begin{equation}\label{TCmapmin}
\iint_{\R^{2n}}\frac{|f(x)-f(y)|^2}{|x-y|^{n+2s}}\,dx\,dy<+\infty.\end{equation}

Then, in the notation of~\eqref{LASJMLv53647r194S},
\begin{equation*}
\lim_{R\to+\infty} \iint_{\R^{2n}}\frac{((f-f_R)(x)-(f-f_R)(y))^2}{|x-y|^{n+2s}}\,dx\,dy=0.
\end{equation*}
\end{corollary}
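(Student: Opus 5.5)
We want to show that the Gagliardo seminorm of $f-f_R=f(1-\tau_R)$ tends to zero. Since $f-f_R = f\,(1-\tau_R)$, we expand the increment as
$$ (f-f_R)(x)-(f-f_R)(y)=(1-\tau_R(x))\big(f(x)-f(y)\big)-f(y)\big(\tau_R(x)-\tau_R(y)\big), $$
and use $(a+b)^2\le 2a^2+2b^2$. This bounds the energy of $f-f_R$ by twice the sum of
$$ I_R:=\iint_{\R^{2n}}\frac{(1-\tau_R(x))^2(f(x)-f(y))^2}{|x-y|^{n+2s}}\,dx\,dy $$
and of the cutoff term
$$ II_R:=\iint_{\R^{2n}}\frac{f^2(y)(\tau_R(x)-\tau_R(y))^2}{|x-y|^{n+2s}}\,dx\,dy . $$

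For $I_R$ we argue by dominated convergence. The integrand is dominated by $(f(x)-f(y))^2|x-y|^{-n-2s}$, which is integrable by~\eqref{TCmapmin}. Moreover, $1-\tau_R(x)=0$ whenever $|x|<R$, so the integrand tends to zero pointwise as $R\to+\infty$. Hence $I_R\to0$.

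The cutoff term $II_R$ is the main point, and the natural tool is the previous lemma. If $p=2^*_s$, estimate~\eqref{La678:jkokmv502-2} gives the exponent
$$ \frac{n(p-2)}{p}-2s=n\Big(1-\frac{n-2s}{n}\Big)-2s=0, $$
so we only obtain $II_R\le C\|f\|^2_{L^{2^*_s}}$: the term is bounded but not small. (For $p<2^*_s$ the exponent is negative and $II_R\to0$ directly.) To gain smallness at $p=2^*_s$, we use the sharper form~\eqref{La678:jkokmv502-1a} and split the $y$-integral at $|y|=M$, with $M$ fixed and $R\to+\infty$.

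On $\{|y|\ge M\}$ we apply H\"older exactly as in the proof of~\eqref{La678:jkokmv502-2}, with $f$ replaced by $f\chi_{\{|y|\ge M\}}$. This gives the bound $C\|f\|^2_{L^{p}(\R^n\setminus B_M)}$, which is small for $M$ large, uniformly in $R$.

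On $\{|y|<M\}$ we use the pointwise bound $R^n(R+|y|)^{-n-2s}\le R^{-2s}$. This gives at most
$$ CR^{-2s}\int_{B_M}f^2\le CR^{-2s}M^{n(p-2)/p}\|f\|^2_{L^p}, $$
which tends to zero as $R\to+\infty$ for fixed $M$.

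Letting first $R\to+\infty$ and then $M\to+\infty$ gives $II_R\to0$ for every $p\in[2,2^*_s]$. For $p=2$ the $B_M$ part is simply bounded by $CR^{-2s}\|f\|_{L^2}^2$. Combining the limits of $I_R$ and $II_R$ yields the claim.

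The only delicate step is the critical case $p=2^*_s$, where the scaling in~\eqref{La678:jkokmv502-2} is exactly invariant. That is why the tail splitting above is needed, relying on $f\in L^{2^*_s}$ so that $\|f\|_{L^{2^*_s}(\R^n\setminus B_M)}\to0$.
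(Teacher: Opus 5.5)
Your proof is correct. It differs from the paper's only in how you make the cutoff term $II_R$ small at the critical exponent.

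The paper fixes $\epsilon>0$ and approximates $f$ in $L^p(\R^n)$ by some $f^\star_\epsilon\in C^\infty_c(\R^n)$. It then splits the increment into three pieces and applies \eqref{La678:jkokmv502-2} twice as a black box:
\begin{itemize}
\item with exponent $2$ on the smooth part, giving $CR^{-2s}\|f^\star_\epsilon\|^2_{L^2(\R^n)}\to0$;
\item with exponent $p$ on the remainder, giving at most $C\epsilon^2$, since $\frac{n(p-2)}{p}-2s\le0$.
\end{itemize}

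You instead split the domain of the $y$-integral in \eqref{La678:jkokmv502-1a} at $|y|=M$. On $B_M$ you use the trivial bound $R^n(R+|y|)^{-n-2s}\le R^{-2s}$ together with H\"older on a bounded set. Outside $B_M$ you use the tail decay $\|f\|_{L^p(\R^n\setminus B_M)}\to0$.

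In effect, this is the paper's argument with the specific choice $f^\star:=f\chi_{B_M}$. That choice is all the paper really uses: an $L^2$ function that is $L^p$-close to $f$.

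Your version gains a little. Since \eqref{La678:jkokmv502-1a} is linear in $f^2$ and $f^2=f^2\chi_{B_M}+f^2\chi_{\R^n\setminus B_M}$, no cross terms appear. So you need neither the three-term inequality $(a+b+c)^2\le4(a^2+b^2+c^2)$ nor the density of $C^\infty_c$ in $L^p$. The paper's version gains brevity by reusing \eqref{La678:jkokmv502-2} without reopening the H\"older computation. Both arguments correctly identify $p=2^*_s$ as the only case where smallness is not automatic.
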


\begin{proof} The proof is easier when~$p\in[2,2^*_s)$, but in this paper the
crucial exponent is precisely~$p=2^*_s$, so we provide a comprehensive proof.
Given~$\e>0$, we pick~$f^\star_\e\in C^\infty_c(\R^n)$ such that~$\|f-f^\star_\e\|_{L^p(\R^n)}<\epsilon$.

We also observe that
\begin{equation}\label{BNn5fd02fgdhs0hfwty-1}\begin{split}&
\big|(f-f_R)(x)-(f-f_R)(y)\big|=
\big|f(x)(1-\tau_R)(x)-f(y)(1-\tau_R)(y)\big|\\&\qquad\le
|f(x)-f(y)|\,(1-\tau_R(x))+|f(y)|\,|\tau_R(x)-\tau_R(y)|\\&\qquad\le
|f(x)-f(y)|\,(1-\tau_R(x))+|f^\star_\e(y)|\,|\tau_R(x)-\tau_R(y)|\\&\qquad\qquad+|(f-f^\star_\e)(y)|\,|\tau_R(x)-\tau_R(y)|.\end{split}
\end{equation}

Now, by~\eqref{TCmapmin} and the Dominated Convergence Theorem,
\begin{equation}\label{BNn5fd02fgdhs0hfwty-2}\lim_{R\to+\infty}
\iint_{\R^{2n}}\frac{|f(x)-f(y)|^2\,(1-\tau_R(x))}{|x-y|^{n+2s}}\,dx\,dy
=0.
\end{equation}

Moreover, using~\eqref{La678:jkokmv502-2} with, e.g., $p=2$,
\begin{equation}\label{BNn5fd02fgdhs0hfwty-3}\lim_{R\to+\infty}
\iint_{\R^{2n}}\frac{|f^\star_\e(y)|^2 (\tau_R(x)-\tau_R(y))^2}{|x-y|^{n+2s}}\,dx\,dy\le
C\lim_{R\to+\infty} R^{-2s}\|f_\e^\star\|_{L^2(\R^n)}^2=0.
\end{equation}

Besides, it follows from our assumption on~$p$ that~${\frac{n(p-2)}{p}-2s}\le0$ and consequently, by~\eqref{La678:jkokmv502-2},
\begin{equation*}\begin{split}&\lim_{R\to+\infty}
\iint_{\R^{2n}}\frac{
|(f-f^\star_\e)(y)|^2
 (\tau_R(x)-\tau_R(y))^2}{|x-y|^{n+2s}}\,dx\,dy\\&\quad\le C\|f-f_\e^\star\|_{L^p(\R^n)}^2
\lim_{R\to+\infty} 
 R^{\frac{n(p-2)}{p}-2s}\le C\|f-f_\e^\star\|_{L^p(\R^n)}^2\le C\epsilon^2.
\end{split}\end{equation*}

Putting together this observation, \eqref{BNn5fd02fgdhs0hfwty-1}, \eqref{BNn5fd02fgdhs0hfwty-2}, and~\eqref{BNn5fd02fgdhs0hfwty-3}, and using that~$(a+b+c)^2\le 4(a^2+b^2+c^2)$, we gather that
$$\lim_{R\to+\infty} \iint_{\R^{2n}}\frac{((f-f_R)(x)-(f-f_R)(y))^2}{|x-y|^{n+2s}}\,dx\,dy\le C\epsilon^2,$$thus the desired result follows by taking~$\e$ as small as we wish.
\end{proof}

Now we let
\begin{equation}\label{XPMAODe} X_p:=\left\{f\in{L^{p}(\R^n)}{\mbox{ s.t. }}
\iint_{\R^{2n}}\frac{(f(x)-f(y))^2}{|x-y|^{n+2s}}\,dx\,dy<+\infty
\right\},
\end{equation}
endowed with its natural norm
$$\|f\|_{X_p}:= \|f\|_{L^{p}(\R^n)}+\sqrt{\iint_{\R^{2n}}\frac{(f(x)-f(y))^2}{|x-y|^{n+2s}}\,dx\,dy}.$$
The preparatory work above allows us to obtain the following useful density
property.

\begin{proposition}\label{prop:dens}
Let~$p\in[2,2^*_s]$. Then,
smooth and compactly supported functions are dense
in~$X_p$.
\end{proposition}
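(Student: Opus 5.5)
The proof will be a standard two-stage approximation: first cut off at infinity, then mollify. Fix $f\in X_p$ and $\e>0$.

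\emph{Cutoff.} Using the notation in~\eqref{LASJMLv53647r194S}, Corollary~\ref{f0dojflergh4-5ipyhk3} applies since $p\in[2,2^*_s]$, and it gives that the Gagliardo seminorm of $f-f_R$ tends to zero as $R\to+\infty$. For the Lebesgue part, note that $|f-f_R|\le|f|\chi_{\R^n\setminus B_R}$. Hence $\|f-f_R\|_{L^p(\R^n)}\to0$ by dominated convergence. So we can fix $R$ with $\|f-f_R\|_{X_p}<\e$, and we may assume from now on that $f$ is supported in $B_{2R}$.

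\emph{Mollification.} Let $\eta_\delta$ be a standard mollifier and set $g_\delta:=f_R*\eta_\delta\in C^\infty_c(B_{2R+1})$.
\begin{itemize}
\item The $L^p$ part is immediate: $\|g_\delta-f_R\|_{L^p}\to0$ because $p<+\infty$.
\item For the seminorm, write $h:=f_R$ and $\Delta_z h(y):=h(y+z)-h(y)$. Then
\[
[h]^2=\int_{\R^n}\frac{\|\Delta_z h\|^2_{L^2(\R^n)}}{|z|^{n+2s}}\,dz .
\]
Since mollification commutes with translations,
\[
\Delta_z(g_\delta-h)=\eta_\delta*\Delta_z h-\Delta_z h .
\]
For each fixed $z$ this tends to $0$ in $L^2$. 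Moreover, by Young's inequality it is bounded by $2\|\Delta_z h\|_{L^2}$.
\end{itemize}
The seminorm convergence $[g_\delta-h]\to0$ then follows from dominated convergence in $z$, with integrable majorant $4\|\Delta_z h\|^2_{L^2}|z|^{-n-2s}$.

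\emph{Main obstacle.} The one point needing care is that $\Delta_z h\in L^2$, i.e.\ $h\in L^2$; this matters when $p>2$. It holds because $h$ has compact support and $p\ge2$, so $h\in L^p(B_{2R})\subset L^2(B_{2R})$. Also, $[h]<+\infty$ was already obtained in the cutoff step. With these two facts the dominated convergence argument is justified.

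Combining the two steps yields $\|f-g_\delta\|_{X_p}<2\e$ for $\delta$ small enough, which proves the density.
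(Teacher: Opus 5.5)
Your proof is correct. The cutoff stage is the same as the paper's: Corollary~\ref{f0dojflergh4-5ipyhk3} handles the seminorm and dominated convergence handles the $L^p$ part. The second stage takes a different route.

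\textbf{The paper's second stage.} It observes that $f_{R_\e}\in H^s(\R^n)$ and cites the density of $C^\infty_c(\R^n)$ in $H^s(\R^n)$ to obtain $g_\e$ with $\|f_{R_\e}-g_\e\|_{H^s(\R^n)}\le\e$. For $p>2$, closeness in $H^s$ does not directly control the $L^p$ norm. The paper therefore interpolates $\|\cdot\|_{L^p}$ between $\|\cdot\|_{L^2}$ and $\|\cdot\|_{L^{2^*_s}}$ and bounds the $L^{2^*_s}$ factor with the fractional Sobolev inequality~\eqref{Hnasla-02}.

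\textbf{Your second stage.} You mollify $f_R$ directly and check both parts of the $X_p$ norm along the same sequence. The $L^p$ convergence is the standard mollifier fact. The seminorm convergence follows from
\[
[h]^2=\int_{\R^n}\|\Delta_z h\|_{L^2(\R^n)}^2\,|z|^{-n-2s}\,dz
\]
together with the bound $\|\eta_\delta*\Delta_zh-\Delta_zh\|_{L^2}\le2\|\Delta_zh\|_{L^2}$ and dominated convergence in~$z$. You also correctly flag the one point that needs care: $h\in L^2$, which holds because $h$ has compact support and $p\ge2$.

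\textbf{What each route buys.} Your argument is self-contained; in effect it reproves the cited $H^s$ density. It also needs neither the Sobolev inequality nor the interpolation step. Your mollification step in fact works for every $p\in[2,+\infty)$, so the restriction $p\le2^*_s$ enters your proof only through the cutoff corollary. In the paper it is used twice: in the cutoff and in the interpolation. The paper's version is shorter on the page because it delegates the mollification to the literature.
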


\begin{proof} 
Let $$ H^s(\R^n):=\left\{f\in{L^{2}(\R^n)}{\mbox{ s.t. }}
\iint_{\R^{2n}}\frac{(u(x)-u(y))^2}{|x-y|^{n+2s}}\,dx\,dy<+\infty
\right\},$$
endowed with its natural norm.

By the fractional Sobolev Inequality (see e.g.~\cite[Theorems~2.4 and~6.5]{MR2944369}), we know that, for all~$v\in H^s(\R^n)$,
\begin{equation}\label{Hnasla-02}\|v\|_{L^{2^*_s}(\R^n)}^2\le C_{n,s}\iint_{\R^{2n}}\frac{(v(x)-v(y))^2}{|x-y|^{n+2s}}\,dx\,dy,\end{equation}
for some~$C_{n,s}>0$.

Let~$f\in X_p$. Recalling the notation of~\eqref{LASJMLv53647r194S}, we know that
$$ \lim_{R\to+\infty}\|f-f_R\|_{L^p(\R^n)}=0.$$
Thus, given~$\e>0$, we combine this observation with Corollary~\ref{f0dojflergh4-5ipyhk3} to pick~$R_\e$ such that~$\|f-f_{R_\e}\|_{X_p}\le\e$. Since~$f_{R_\e}$
is compactly supported, we have that
\begin{equation}\label{A:913uij-210Kj3ik}
f_{R_\e}\in H^s(\R^n).\end{equation}

Since, in view of~\cite[Theorem~2.4]{MR2944369}, we know that
smooth and compactly supported functions are dense
in~$ H^s(\R^n)$, we can find~$g_{\e}\in C^\infty_c(\R^n)$ such that~$\|f_{R_\e}-g_{\e}\|_{H^s(\R^n)}\le\e$.

Furthermore,
if~$p\in(2,2^*_s)$, using the H\"older Inequality with exponents~$\frac{2^*_s-2}{p-2}$ and~$\frac{2^*_s-2}{2^*_s-p}$, for any function~$h:\R^n\to\R$ we have that
$$ \|h\|_{L^p(\R^n)}=
\left(\int_{\R^n}|h(x)|^{\frac{2^*_s(p-2)}{2^*_s-2}}|h(x)|^{
\frac{2(2^*_s-p)}{2^*_s-2}
}\,dx \right)^{\frac1p}
\le \|h\|_{L^{2^*_s}(\R^n)}^{\frac{2^*_s(p-2)}{p(2^*_s-2)}} \|h\|_{L^2(\R^n)}^{\frac{2(2^*_s-p)}{p(2^*_s-2)}}.$$
Notice that for~$p=2$ and~$p=2^*_s$ the same estimate trivially holds true.

As a result, 
\begin{eqnarray*}
&&\|f-g_{\e}\|_{X_p}\le\e+\|f_{R_\e}-g_\e\|_{X_p}\le2\e+
\|f_{R_\e}-g_\e\|_{L^p(\R^n)}\\&&\qquad\le2\e+
\|f_{R_\e}-g_\e\|_{L^{2^*_s}(\R^n)}^{\frac{2^*_s(p-2)}{p(2^*_s-2)}} \|f_{R_\e}-g_\e\|_{L^2(\R^n)}^{\frac{2(2^*_s-p)}{p(2^*_s-2)}}\le
2\e+\|f_{R_\e}-g_\e\|_{L^{2^*_s}(\R^n)}^{\frac{2^*_s(p-2)}{p(2^*_s-2)}}
\|f_{R_\e}-g_\e\|_{H^s(\R^n)}^{\frac{2(2^*_s-p)}{p(2^*_s-2)}}
.\end{eqnarray*}

Notice also that~\eqref{A:913uij-210Kj3ik} allows us to use~\eqref{Hnasla-02}
with~$v:=f_{R_\e}-g_\e$, yielding (up to renaming~$C_{n,s}$) that
\begin{eqnarray*}
&&\|f-g_{\e}\|_{X_p}\le
2\e+C_{n,s}\|f_{R_\e}-g_\e\|_{H^s(\R^n)}^{\frac{2^*_s(p-2)}{p(2^*_s-2)}}
\|f_{R_\e}-g_\e\|_{H^s(\R^n)}^{\frac{2(2^*_s-p)}{p(2^*_s-2)}}\\&&\;\quad=
2\e+C_{n,s}\|f_{R_\e}-g_\e\|_{H^s(\R^n)}\le (2+C_{n,s})\e,
\end{eqnarray*}which establishes the desired result.
\end{proof}

\subsection{Functional inequalities}
Here, we recall a fractional Sobolev Inequality:

\begin{lemma}\label{SOBLEE} We have that~$
{\mathfrak{S}}_s>0$.
\end{lemma}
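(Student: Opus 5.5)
The plan is to prove the Sobolev-type bound
$$ \|u\|_{L^{2^*_s}(\R^n_+)}^2\le C\iint_{Q(\R^n_+)}\frac{(u(x)-u(y))^2}{|x-y|^{n+2s}}\,dx\,dy $$
for every admissible~$u$, with~$C$ depending only on~$n$ and~$s$. Once this holds, the infimum in~\eqref{SDEF} is at least~$\frac{c_{n,s}}{2C}>0$. The strategy is to reduce to the whole-space fractional Sobolev inequality~\eqref{Hnasla-02}, using an extension of~$u$ to all of~$\R^n$ whose full Gagliardo energy is controlled by the energy on~$Q(\R^n_+)$.

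\textbf{Choice of extension.} The natural candidate is the even reflection
$$\tilde u(x',x_n):=u(x',|x_n|).$$
We then split~$\R^{2n}$ into four quadrants according to the signs of~$x_n$ and~$y_n$.
\begin{itemize}
\item On~$\R^n_+\times\R^n_+$ the integrand of~$\tilde u$ coincides with that of~$u$.
\item On~$\R^n_-\times\R^n_-$, the change of variables~$(x,y)\mapsto(\bar x,\bar y)$, where~$\bar x:=(x',-x_n)$, maps this quadrant onto~$\R^n_+\times\R^n_+$. It preserves~$|x-y|$, so this contribution equals the~$\R^n_+\times\R^n_+$ energy of~$u$.
\item On the mixed quadrants, for~$x\in\R^n_+$ and~$y\in\R^n_-$ we have~$|x-\bar y|\le|x-y|$. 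Hence
$$\frac{(u(x)-u(\bar y))^2}{|x-y|^{n+2s}}\le\frac{(u(x)-u(\bar y))^2}{|x-\bar y|^{n+2s}},$$
and after reflecting~$y$ this is again bounded by the~$\R^n_+\times\R^n_+$ energy of~$u$.
\end{itemize}
Summing the four pieces gives
$$\iint_{\R^{2n}}\frac{(\tilde u(x)-\tilde u(y))^2}{|x-y|^{n+2s}}\,dx\,dy\le 4\iint_{\R^n_+\times\R^n_+}\frac{(u(x)-u(y))^2}{|x-y|^{n+2s}}\,dx\,dy.$$
The right-hand side is at most~$4$ times the~$Q(\R^n_+)$ energy, since the remaining parts of~$Q(\R^n_+)$ contribute nonnegative terms. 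Note that this argument does not use the values of~$u$ in~$\R^n_-$ at all.

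\textbf{Applying the whole-space inequality.} We have~$\|\tilde u\|_{L^{2^*_s}(\R^n)}^{2^*_s}=2\|u\|_{L^{2^*_s}(\R^n_+)}^{2^*_s}$, so it remains to apply~\eqref{Hnasla-02} to~$\tilde u$. That inequality was stated for~$H^s(\R^n)$ functions, whereas~$\tilde u$ is only known to lie in~$L^{2^*_s}$ with finite seminorm, i.e.\ in~$X_{2^*_s}$. This technical point is where the earlier material enters. By Proposition~\ref{prop:dens} with~$p=2^*_s$, there are~$g_j\in C^\infty_c(\R^n)\subset H^s(\R^n)$ with~$g_j\to\tilde u$ in~$X_{2^*_s}$. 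Then~\eqref{Hnasla-02} for~$g_j$ passes to the limit: the~$L^{2^*_s}$ norms and the seminorms both converge by the definition of the~$X_p$ norm. This yields~\eqref{Hnasla-02} for~$\tilde u$.

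\textbf{Conclusion and main obstacle.} Combining the two steps,
$$ 2^{\frac{2}{2^*_s}}\|u\|^2_{L^{2^*_s}(\R^n_+)}\le 4C_{n,s}\iint_{Q(\R^n_+)}\frac{(u(x)-u(y))^2}{|x-y|^{n+2s}}\,dx\,dy, $$
so~${\mathfrak{S}}_s\ge c_{n,s}\big(8C_{n,s}\big)^{-1}>0$. The step needing the most care is the mixed-quadrant comparison~$|x-\bar y|\le|x-y|$ when~$x_n>0$ and~$y_n<0$. It follows from~$(x_n+|y_n|)^2\ge(x_n-|y_n|)^2$ and should be stated explicitly. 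The density step is routine given Proposition~\ref{prop:dens}.
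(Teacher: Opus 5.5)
Your proposal is correct and is essentially the paper's proof: the even reflection across $\{x_n=0\}$, the bound $|x-\bar y|\le|x-y|$ on the mixed quadrants (so the full-space energy is at most $4$ times the $\R^n_+\times\R^n_+$ energy), and Proposition~\ref{prop:dens} with $p=2^*_s$ to transfer \eqref{Hnasla-02} to the reflected function, which gives the same constant. The one point to state explicitly, as the paper does, is that you may assume the $Q(\R^n_+)$ energy of $u$ is finite (otherwise there is nothing to prove), so that the reflection actually lies in $X_{2^*_s}$.
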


\begin{proof} Let ${u\in L^{2^*_s}(\R^n_+)}$ with ${{\|u\|_{L^{2^*_s}(\R^n_+)}=1}}$. We claim that
\begin{equation}\label{Hnasla-01}
\iint_{Q(\R^{n}_+)}\frac{(u(x)-u(y))^2}{|x-y|^{n+2s}}\,dx\,dy\ge
\frac{2^{\frac{2}{2^*_s}-2}}{C_{n,s}}.
\end{equation}
To prove this, we can assume that the term on the left-hand side of~\eqref{Hnasla-01}
is finite, otherwise we are done.
We define
$$v(x',x_n):=\begin{dcases}
u(x',x_n)&{\mbox{ if }}x_n\ge0,\\
u(x',-x_n)&{\mbox{ if }}x_n<0,
\end{dcases}$$and we stress that~$\|v\|_{L^{2^*_s}(\R^n)}^{2^*_s}=2\|u\|_{L^{2^*_s}(\R^n_+)}^{2^*_s}=2$.

In addition, if~$x$, $z\in\R^n_+$, then
$$  |(x',x_n)-(z',-z_n)|^2-|x-z|^2=(x_n+z_n)^2-(x_n-z_n)^2=4x_nz_n\ge0,$$
therefore
\begin{eqnarray*}&&\!\!\!\!\!\!\!
\frac12\iint_{\R^{2n}}\frac{(v(x)-v(y))^2}{|x-y|^{n+2s}}\,dx\,dy
\\&&=\iint_{\R^n_+\times\R^n_+}\frac{(u(x)-u(y))^2}{|x-y|^{n+2s}}\,dx\,dy+
\iint_{\R^n_+\times\R^n_-}\frac{(u(x',x_n)-u(y',-y_n))^2}{|x-y|^{n+2s}}\,dx\,dy\\&&=\iint_{\R^n_+\times\R^n_+}\frac{(u(x)-u(y))^2}{|x-y|^{n+2s}}\,dx\,dy+
\iint_{\R^n_+\times\R^n_+}\frac{(u(x)-u(z))^2}{|(x',x_n)-(z',-z_n)|^{n+2s}}\, dx\,dz\\&&\le2\iint_{\R^n_+\times\R^n_+}\frac{(u(x)-u(y))^2}{|x-y|^{n+2s}}\,dx\,dy
\\&&\le2\iint_{Q(\R^n_+)}\frac{(u(x)-u(y))^2}{|x-y|^{n+2s}}\,dx\,dy,
\end{eqnarray*}which is finite.

As a result, in the notation of~\eqref{XPMAODe},
we have that~$v\in X_{2^*_s}$ and thus, in light of Proposition~\ref{prop:dens}, given~$\e>0$,
we can find~$v_\e\in C^\infty_c(\R^n)$ with~$\|v_\e-v\|_{X_{2^*_s}}\le\e$.

We can also use the fractional Sobolev Inequality in~\eqref{Hnasla-02} to see that
\begin{equation*}\|v_\e\|_{L^{2^*_s}(\R^n)}^2\le C_{n,s}\iint_{\R^{2n}}\frac{(v_\e(x)-v_\e(y))^2}{|x-y|^{n+2s}}\,dx\,dy.\end{equation*}

All in all,
\begin{eqnarray*}&&\sqrt{
\iint_{Q(\R^n_+)}\frac{(u(x)-u(y))^2}{|x-y|^{n+2s}}\,dx\,dy}
\ge\frac12\sqrt{\iint_{\R^{2n}}\frac{(v(x)-v(y))^2}{|x-y|^{n+2s}}\,dx\,dy}\\
&&\quad\ge\frac12
\left(\sqrt{\iint_{\R^{2n}}\frac{(v_\e(x)-v_\e(y))^2}{|x-y|^{n+2s}}\,dx\,dy}
-\|v_\e-v\|_{X_{2^*_s}}\right)
\\&&\quad\ge\frac12\left(\frac1{\sqrt{C_{n,s}}}\|v_\e\|_{L^{2^*_s}(\R^n)}
-\|v_\e-v\|_{X_{2^*_s}}\right)\\&&\quad\ge\frac12\left(\frac1{\sqrt{C_{n,s}}}\|v\|_{L^{2^*_s}(\R^n)}-\left(\frac1{\sqrt{C_{n,s}}}+1\right)\|v_\e-v\|_{X_{2^*_s}}\right)\\&&\quad\ge\frac12\left(\frac{2^{\frac1{2^*_s}}}{\sqrt{C_{n,s}}}\|u\|_{L^{2^*_s}(\R^n_+)}-\left(\frac1{\sqrt{C_{n,s}}}+1\right)\e\right)\\&&\quad=\frac12\left(\frac{2^{\frac1{2^*_s}}}{\sqrt{C_{n,s}}}-\left(\frac1{\sqrt{C_{n,s}}}+1\right)\e\right).
\end{eqnarray*}
Hence, taking~$\e$ arbitrarily small,
$$\sqrt{
\iint_{Q(\R^n_+)}\frac{(u(x)-u(y))^2}{|x-y|^{n+2s}}\,dx\,dy}
\ge\frac{2^{\frac1{2^*_s}-1}}{\sqrt{C_{n,s}}}$$
which establishes the claim in~\eqref{Hnasla-01}.

The desired result then follows by taking the infimum in~$u$ in~\eqref{Hnasla-01}.
\end{proof}

\end{appendix}

%\bibliography{system}
%\bibliographystyle{abbrv}

\vspace{0.5cm}

\end{document}